\newcommand*{\p}{\mathbb{P}}
\newcites{SM}{References}
\newcommand{\floor}[1]{\lfloor #1 \rfloor}
\newcommand{\ceil}[1]{\lceil #1 \rceil}
\newcommand{\norm}[1]{\left\lVert#1\right\rVert}
\newtheorem{Lemma}{Lemma}[section]
\newtheorem{theorem}[Lemma]{Theorem}
\newtheorem{remark}[Lemma]{Remark}
\newtheorem{example}[Lemma]{Example}
\date{}
\definecolor{darkblue}{rgb}{.1, 0.1,.8}
\definecolor{darkgreen}{rgb}{0,0.8,0.2}
\definecolor{darkred}{rgb}{.8, .1,.1}
\newcommand*{\E}{\mathbb{E}}
\newcommand*{\N}{\mathbb{N}}
\newcommand*{\R}{\mathbb{R}}
\newcommand{\1}{\mathbbm{1}}
        \title{Gradual changes in functional time series}
\author{
  {\normalsize Patrick Bastian, Holger Dette} \\
{\normalsize  Ruhr-Universit\"at Bochum} \\
{\normalsize  Fakult\"at f\"ur Mathematik} \\
{\normalsize  44780 Bochum, Germany}
}
\date{}
\begin{document}
     
        \maketitle

        \begin{abstract}
We consider the problem of detecting gradual changes in the sequence of mean functions from  a not necessarily stationary functional time series. Our approach is  based on   the maximum deviation (calculated over a given time interval) between a benchmark function  and the   mean functions at  different time points.  We speak of a gradual change of size $\Delta  $, if this quantity exceeds  a given   threshold $\Delta>0$. For example, the benchmark function could represent an average of yearly  temperature curves from  the pre-industrial time,  and we are interested in the question if the  yearly temperature curves afterwards deviate from the pre-industrial average by more than $\Delta =1.5$  degrees Celsius, where the deviations are  measured with respect to the sup-norm. Using Gaussian approximations for high-dimensional data we develop a test for hypotheses of this type and estimators for the time where a deviation of size larger than $\Delta$ appears for the first time.  We  prove the validity of our approach and  illustrate the new methods  by a simulation study and a data example, where we analyze yearly temperature curves at different stations in Australia. 
        \end{abstract}

\medskip
  \noindent
  Keywords:   gradual changes, functional time series, Gaussian approximation, Bootstrap, non-stationarity

\noindent AMS Subject classification:   62R10, 62M10, 62F40  

\section{Introduction}
  \def\theequation{1.\arabic{equation}}	
  \setcounter{equation}{0}
In recent decades change point detection has found considerable interest in the literature. Its  applications are numerous and include a diverse menagerie of subjects such as economics, hydrology, climatology, engineering, genomics and linguistics to name just a few. Surveys summarizing some of the more recent results include among others \cite{Aue2013}, \cite{TRUONG2020} and \cite{Aue2023}. In this context data is often given as a uni- or multivariate time series and one then tries to choose an appropriate model to prove mathematically  the validity of inference tools for  detecting  structural breaks in the time series. In the simplest case one considers a univariate  time series $(X_j)_{1 \leq j \leq n}$ of the form 
\begin{align}
\label{p50}
    X_j=\mu(j/n)+\epsilon_j~.
\end{align}
Here $(\epsilon_j)_{1 \leq j \leq n}$ is a stationary error process while $\mu:[0,1]\rightarrow \R$ is a piecewise constant function and the statistical task then consists of estimating $\mu$. A large part of the literature is concerned with the case of only one change (often abbreviated AMOC - at most one change), but the detection of multiple change points has also received considerable interest (see, for example   \cite{Fryzlewicz2013}, \cite{Frick2014},  \cite{baranowskietal2019}, \cite{Eckle2020} and the references in these works).

While the above setting with a locally constant mean is often well justified in applications and also theoretically attractive,  there are certain classes of applications such as temperature or financial data where justifying this assumption is difficult. In these cases a model with a smoothly varying mean $\mu$ might be more appropriate even if the theoretical analysis of the associated statistical procedures become substantially more involved. In such cases one talks about {\it  gradually changes}   and 
 some progress in this direction has been made  in recent years by  \cite{Vogt2015}, \cite{Dette2019b} and \cite{buecher21}).

Nowadays, in  the big data era more complex dependent data structures such as functional time series are recorded  and just as in the Euclidean case change point detection is a central concern in this setting (see e.g. \cite{Berkes2009}, \cite{Aue2017}, \cite{dette2020}, \cite{HORVATH2022}). Meanwhile, there is a substantial amount of literature for the AMOC case in the functional setting and  several authors have also worked  on detecting  multiple change points  in functional time series (e.g. \cite{Padilla2022}, \cite{rice2022} and \cite{bastian2023}). On the other hand, to the best of our knowledge,  change point analysis for gradually changing functional time series is not available. 

In  the present paper we address this challenging problem and develop statistical inference tools  for detecting gradual changes in the mean function of a non-stationary   functional times series of the form
\begin{align} \label{det1}
    X_j(s)=\mu(j/n,s)+\epsilon_j(s) \quad \quad j=1,...,n
\end{align}
where $(\epsilon_j(\cdot))_{1 \leq j \leq n}$ is a mean zero  not necessarily stationary process and $\mu(\cdot,\cdot)$ is a sufficiently smooth mean function. Our modeling of  functional data in the form \eqref{det1} is motivated by the observation that in many  cases it is very unlikely that the mean functions  $\mu (t,\cdot )$ will be exactly constant over certain time intervals.  In fact  we argue that there will  always be a difference between the functions $s \to \mu (t_1,s) $ and 
$s \to \mu (t_2,s) $ at different  time points $t_1$ and $t_2$
in some  decimal place. Therefore, it is more natural to assume that these functions are smoothly changing. 
Similarly, it is hard to justify that the error  process $(\epsilon_j(\cdot))_{1 \leq j \leq n}$ is  stationary.  

Model  \eqref{det1} reflects this point of view. However, as the mean function is continuously changing, a change point problem cannot be formulated in  its ``classical'' form $H_0:\mu_1=\mu_2 \text{ vs } H_1:\mu_1 \neq \mu_2$, where $\mu_1$ and $\mu_2$ are the functions before and after a change point, respectively. 
Instead we consider the problem of detecting a  deviation of the mean from a given benchmark function such as the initial mean function $\mu(0,\cdot)$ at "time $0$"  or the average mean function calculated from the data in a specified time frame in the past.  More precisely we are interested in hypotheses of the form
\begin{align}
\label{p51}
    H_0(\Delta):d_\infty :=\sup_{(t,s)}|\mu(t,s)-g_\mu(s)|\leq \Delta \quad \text{ vs } \quad H_1(\Delta):d_\infty> \Delta ~. 
\end{align}
where $\Delta > 0$ is a pre-specified threshold. We will develop a test for hypotheses of this type and  construct an estimator for the first time of such a deviation, that is 
$$
t^* = \inf \big \{ t |~ \sup\limits{_s}   | \mu(t,s)-g_\mu(s)  |\leq \Delta  \big \} 
.
$$
A noteworthy advantage of hypotheses of this form is their easy interpretability for practitioners as soon as a threshold function $g_\mu$ and a threshold $\Delta >0$ have been chosen. This choice is application specific.    Consider, for example  the case of analyzing annual temperature data where each observation is a curve representing the temperature over a year (or another time span) at a specific location. One may choose the function $g_\mu$ as  the historical average of   pre-industrial temperature curves, say $g (s) =\int_{0}^{x_0} \mu (u,s)  du$ and then calculate the supremum in \eqref{p51} over the set $\{ (t,s) |  t \geq x_0 \} $. A typical threshold  in such an application is   $\Delta = 1.5$ degrees Celsius corresponding to the  Paris Agreement  adopted  at the UN Climate Change Conference (COP21) in Paris, 2015.
In this case testing the hypotheses \eqref{p51} automatically yields a coherent answer to questions of the type  {\it "has the temperature increased by more than $\Delta$ degrees  Celsius compared the pre-industrial temperatures"}. We also develop a data adaptive  rule for the choice of $\Delta $ to  define a  measure of evidence  for a  deviation of $\mu$ from  the benchmark function $g_\mu$   with a controlled type I error  $\alpha$,  see Remark \ref{pr2} for details.

These advantages come with the cost of substantial mathematical challenges  in the construction of corresponding change point analysis tools  and 
their theoretical analysis to obtain statistical guarantees. The problems are already apparent in the recent  work by  \cite{buecher21} who consider hypotheses of the form \eqref{p51} for univariate data (corresponding to a fixed value of  $s$ in the our  case). For the univariate model \eqref{det1}  with an stationary error process these authors proposed a nonparametric  estimator 
for the maximum deviation $\sup_{t \in [0,1]} | m(t) -g| $ (here $g \in \mathbb{R}$ is a benchmark) and investigated its  asymptotic distribution. Among other things their asymptotic analysis  relies crucially on two results which  are not available in the functional setting considered in  this paper:
\begin{itemize}
    \item [(i)] A strong approximation for partial sums of weakly dependent random variables with quickly decaying approximation error.
    \item [(ii)] Results on the limits of maxima of stationary Gaussian fields (see, for example,  \cite{Piterbarg1996}).
\end{itemize}
Regarding (i)  we remark that, the analysis of nonparametric estimators for the distance $d_\infty $ defined in \eqref{p51} would require a strong approximation result for partial sums of functional time series. 
We are  able to obtain such results  for $\beta-$mixing $C[0,1]$-valued random variables using results from  \cite{Dehling1983}. However, the error in these approximations does not decay sufficiently fast  for our purposes.  The situation is even worse in regards to (ii), where the functional nature of the error process introduces heavy non-stationarities in the stochastic approximations of the estimator. This makes the derivation of  limit results for a regression approach virtually impossible in our setting (note that  determining the limit of the maximum of a non stationary Gaussian sequence is already extremely challenging in the univariate case).

To solve these issues we will  pursue  an alternative approach and 
develop a novel bootstrap procedure which is applicable under mild assumptions on the serial dependence and on the smoothness of the error process and of the function $(t,s) \to \mu(t,s)$. In particular, in contrast to \cite{buecher21}, the methodology developed in this paper does not require the assumption of a second order stationary error process in model \eqref{det1}. In 
Section \ref{sec2} we will formally introduce the testing problem \eqref{p51} in more detail and propose an estimator for the maximum deviation $d_\infty$. We show that this estimator enjoys a stochastic expansion which heavily depends on the extremal points of the maximum deviation (i.e. the points $(t,s)$ where the maximum absolute deviation between the mean function $\mu(t,s)$ and the benchmark function $g_\mu(s)$ is attained). We use this result to develop a bootstrap procedure that is able to consistently test the introduced hypotheses at nominal  level $\alpha$. In Section \ref{sec3} we construct an estimator for the time of the first relevant deviation. To evaluate the finite sample performance we present a simulation study in Section \ref{sec4} which also contains an application of our methodology to real world climate data. Finally, in  Section \ref{sec5} we give the proofs of our main  results.   

\section{Gradual changes in functional data}
  \def\theequation{2.\arabic{equation}}	
  \setcounter{equation}{0}
\label{sec2}

Let  $C (T)$ denote the space of continuous functions defined on the set $T$. Throughout this paper $T$ will be of the form $T=\prod_{i=1}^k [a_i,b_i]$ for some $0 \leq a_i < b_i \leq 1$. We define by $\|  \cdot \|_{\infty,T} $ the sup-norm on $C(T)$ and use the notation  $\|\cdot\|_\infty$ whenever  the corresponding space $C(T)$ is clear from the context.
We consider a triangular-array $\{ X_{n, j} |  \, j = 1, \dots, n \}_ {n \in \N}$ of $C ( [0,1])$-valued random variables with the representation
\begin{align}\label{p1}
    X_{n, j}(s) = \mu\Big(\frac{j}{n},s\Big) + \epsilon_{j}(s),  \quad \quad j = 1, \dots, n,  
\end{align}
where $\{\epsilon_{j} \}_{j \in \N} $  is a mean zero process in $C([0,1])$ and $\mu \in C ( [0,1]^2)$ denotes the unknown  time dependent mean function. Note that we assume at this point that the full trajectories $s \rightarrow X_{n,j}(s)$ are available. The results can be extended to densely observed functional time series  (see Remark \ref{pr1} below). Note  that we do not assume that the error process $\{\epsilon_{j} \}_{j \in \N} $ is second order stationary.

 We are interested in detecting significant
deviations in the sequence of mean functions $\{ \mu(t/n,\cdot)\}_{j=1,...,n}\subset C([0,1])$ from a given benchmark function in a time interval $[x_0,x_1] \subset [0, 1]$ (see Example \ref{pex1} below for some concrete examples), where we measure deviations with respect to the supremum norm. For this purpose, we consider a continuous function $g_\mu:[0,1]\rightarrow \R$ as a benchmark and define the distance
\begin{align}
\label{p2}
    d_\infty:=\sup_{t \in [x_0,x_1]} \sup_{s \in [0,1]}| \mu(t,s)-g_\mu(s)|~.
\end{align}
For the sake of simplicity we will not reflect the dependence of $d_\infty$ on $x_0,x_1$ and $g_\mu$ in our notation. We are interested in the hypotheses 
\begin{align}
\label{p3}
    H_0(\Delta): d_\infty \leq \Delta \quad \text{ vs } \quad H_1(\Delta):d_\infty > \Delta ,
\end{align}
where $\Delta > 0$ is a given constant, and in the time of the first time $t$ where the maximum deviation of the function  $s \to \mu (t,s)$ from the benchmark function $s \to  g_\mu (s)$ is larger or equal than $\Delta$, that is
\begin{align}
\label{p200}
    t^*(\Delta):=\inf\{ t \ | \ \|\mu(t,\cdot)-g_\mu(\cdot)\|_\infty \geq \Delta \}~.
\end{align}
Before we continue, we discuss two  choices for the benchmark function  $g_\mu$.
\begin{example}
\label{pex1} ~~ {\rm 
   \begin{enumerate}
    \item[(1)] \label{p90} 
    If we take $x_0=0,x_1=1$ and
    \begin{align}
        \label{det10} 
   g_\mu(s)=\mu(0,s), 
    \end{align}
    we investigate if at some point $t \in (0,1]$ the mean function $s \rightarrow \mu(t,s)$ has a maximum deviation larger  than $\Delta$ from the initial mean function $s\rightarrow \mu(0,s)$.
    \item[(2)] \label{p91} 
    With the choice $x_0>0,$ 
       \begin{align}
        \label{det11} 
        {g_\mu} (s)=(1/x_0)\int_{0}^{x_0}\mu(t,s)dt
        \end{align}
        and $x_1=1$, we investigate if at some time point $t \in [x_0,x_1]$ the mean function has a maximum deviation larger  than $\Delta$ from   from its average function $s \rightarrow g_\mu(s)$ calculated on the time interval $ [0,x_0]$.   
\end{enumerate} 
}
\end{example}

In order to estimate the maximum deviation $d_\infty$ we propose to estimate $\mu$  via local linear regression (see \cite{fan1996}) and then use a plug-in estimator for the maximum deviation combined with an appropriate estimator of the threshold function $g_\mu$. As we assume that the functional time series is fully observed only smoothing with respect to the $t$-direction is required at this point. For this purpose  let $K$ denote a kernel function (see  Assumption (A4) below for details) and define $K_h(\cdot) =K(\cdot / h)$ for $h>0$. For fixed $s \in [0,1]$ the local linear estimator $\hat \mu_{h_n}$ of the function $\mu$ at  the point $(t,s)$ with
positive bandwidth $h_n=o(1)$ as $n \rightarrow \infty$ is defined by the first coordinate of the
minimizer
\begin{align}
    \label{p4}
    (\hat \mu_{h_n}(t,s), \hat {\mu^\prime}_{h_n}(t,s))=\underset{c_0,c_1 \in \R}{\arg \min}\sum_{j=1}^n(X_j(s)-c_0-c_1(j/n-t))^2K_{h_n}(j/n-t).
\end{align}
In the following we will use a bias corrected version (see \cite{Schucany1977}) of the local linear estimator given by
\begin{align}
    \label{p5}
    \tilde \mu_{h_n}=2\hat \mu_{h_n/\sqrt{2}}-\hat \mu_{h_n}~.
\end{align}
Furthermore, letting  $\hat g_n$ be an appropriate estimator (see Assumption (A6) below) of the benchmark function $g_\mu$ we can define a natural estimator for $d_\infty$ by
\begin{align}
    \hat d_{\infty,n}:=\sup_{t \in I_n} \sup_{s \in [0,1]}| \tilde \mu_{h_n}(t,s)-\hat g_n(s)|~,
\end{align}
where we calculate the supremum with respect to the variable $t$ on the interval 
$$
I_n=[x_0 \lor h_n, x_1 \land (1-h_n)]
$$ 
to take boundary effects in the estimation of $\mu$ into account. We will then reject $H_0(\Delta)$ for large values of the statistic $\hat d_{\infty,n}$. In order to find suitable critical values we will approximate $\hat d_{\infty,n}$ by a maximum over a suitable partial sum process whose quantiles can be obtained by a resampling method, see Section \ref{sec1.3} for details.

\subsection{Assumptions}
For modeling the dependence in the functional time series model \eqref{p1} we will use the concept of $\beta$-mixing as introduced in \cite{Bradley2005}. To be precise, assume that all random variables are defined on a probability space $(\Omega, \mathcal{A}, \mathbb{P})$, consider two sigma fields $\mathcal{F}$, $\mathcal{G} \subset \mathcal{A}$ and define
\begin{align}   
    \beta(\mathcal{F},\mathcal{G})=\sup\frac{1}{2}\sum_{i=1}^I\sum_{j=1}^J|\p(F_i\cap G_j)-\p(F_i)\p(G_j)|
\end{align}
where the supremum is taken over all partitions $\{F_1,...,F_I\}$ and $\{G_1,...,G_J\}$ of $\Omega$ such that $F_i \in \mathcal{F}$ and $G_i \in \mathcal{G}$. Next we denote by   $\mathcal{F}_k^{k^\prime}$  the sigma field generated by the error process $\{\epsilon_j\}_{k \leq j \leq k^\prime}$ in model \eqref{p1} and define the $\beta$ mixing coefficients of the sequence $\{\epsilon_j\}_{j \in \N}$ by
\begin{align}   
    \beta(k)=\sup_{l \in \N} \beta(\mathcal{F}_1^l,\mathcal{F}_{l+k}^\infty)
\end{align}
The sequence $\{\epsilon_j\}_{j \in \N}$ is called $\beta$-mixing if  $\beta(k)\rightarrow 0$ as $k \rightarrow \infty$. We can now proceed with stating the assumptions we will make throughout this paper.

\begin{enumerate}
    \item  [{\bf(A1)}]  The error process $\{\epsilon_j\}_{j \in \N}$  has bounded exponential moments, i.e. 
    $$
    \E[\exp(\|\epsilon_1\|_\infty)]\leq K<\infty.
    $$
    \item  [{\bf(A2)}]
          $\{\epsilon_j\}_{j \in \N}$  is $\beta$-mixing, with mixing coefficients $\beta(k)$ satisfying
        \begin{align*}
            \sum_{k=1}^\infty (k+1)^{J/2-1}\beta(k)^{\frac{\delta}{J+\delta}}<\infty~, 
        \end{align*}
        for some even $J\geq 8$ and some $\delta>0$.
    \item [{\bf(A3)}] For some $\alpha>0$ such that $\alpha J >2$ there exists a constant  $C>0$ such that 
    \begin{align}
        \sup_{s,s^\prime \in [0,1],j\in \N}\E \big [|\epsilon_j(s)-\epsilon_j(s^\prime)|^J\big ]^{1/J} \leq C|s-s^\prime|^\alpha
    \end{align}
    \item [{\bf(A4)}] The kernel $K$ is a symmetric and twice differentiable function, supported on the interval $[-1,1]$ and satisfies $\int_{-1}^{1} K(x)dx=1$ as well as $K(0)>0$. Further assume that the bandwidth $h_n$ satisfies $h_n\sim n^{-\gamma_1}$ where $1/7<\gamma_1 \leq 1/5$
    
    \item [{\bf(A5)}] The partial derivative $\frac{\partial^2}{\partial t^2}\mu(t,s)$ of the function $\mu:[0,1]^2\rightarrow \R$ in model \eqref{p3} exists on $[0,1]^2$, is continuous in both arguments and Lipschitz  continuous with respect to the first component, that is
    \begin{align}
        \Big |\frac{\partial^2}{\partial t^2}\mu(t,s)-\frac{\partial^2}{\partial t^2}\mu(t',s)\Big | \leq C|t-t'|~,
    \end{align}    
    where the constant  $C$ does not depend on $s$.
    \item [{\bf(A6)}] The estimator $\hat g_n$ of the benchmark function satisfies
    \begin{align}
        \|\hat g_n-g_\mu\|_\infty = o_\p(n^{-\gamma_2}(nh_n)^{-1/2})
    \end{align}
    for some $\gamma_2>0$.
\end{enumerate}
We stress that in contrast to \cite{buecher21},  we make no stationarity assumptions on the error process in model \eqref{p1}. Assumption (A2) and (A3) control the trade off between the smoothness of the error process and its dependence, in contrast to other works considering relevant change points for functional time series we only require $\beta$ instead of $\phi$-mixing (\cite{dette2020}, \cite{dette2022}, \cite{bastian2023}). Less smoothness requires faster decay of the mixing coefficients. For $\alpha<1/2$ condition (A3) is for instance satisfied by the Brownian motion. Assumption (A1) is made for convenience of presentation. Our results remain true if we instead only assume  the existence of moments  at the cost of making the  proofs, the formulations of the analogs of Assumption (A2), (A3) and the statements of the main results much more complicated. Assumption (A3) is similar but weaker than the conditions used in \cite{dette2020} and \cite{bastian2023} where the classical offline change point problem was considered for $C(T)$ valued data. Assumption (A4) is a standard assumption on kernels in local linear regression while Assumption (A5) is the functional analogue of Assumption 2.4 in \cite{buecher21}, here we can omit the second part of their assumption as our test is based on resampling instead of an asymptotic limit distribution. Finally assumption (A6) ensures that the error of estimating $g_\mu$ by $\hat g_n$ is negligible compared to the error of estimating $\mu$ by $\tilde \mu_{h_n}$ which is satisfied by many common estimators for $g_\mu(\mu)$. We now discuss this assumption in the context of Example \ref{pex1}

\begin{example}[Continuation of Example \ref{pex1}]  ~~   
{\rm 
    \begin{itemize}
        \item [(1)] Consider the case, where the benchmark function is given by the mean function at the time point $0$, that is $g(s) = \mu (0,s)$. Under Assumptions (A1)-(A5) one can show by arguments similar to those in the proof of Lemma \ref{pl3} in the Appendix that
    \begin{align}
        \|\tilde \mu_{\tilde h_n}(0,s)-\mu(0,s)\|_\infty =O_\p((n\tilde h_n)^{-1/2})+O(\tilde h_n^3+(n\tilde h_n)^{-1})
    \end{align}    
    for any bandwidth $\tilde h_n$ such that $\tilde h_n \rightarrow 0$ and $\tilde h_nn\rightarrow \infty$. Choosing, for instance, $\tilde h_n=h_n^{1+\delta}$ for some $\delta>0$ yields an estimator for $g_\mu(s)=\mu(0,s)$ that satisfies (A6).
    \item [(2)] 
    \label{p95}For benchmarks of the form $g_\mu(s)=1/x_0 \int_0^{x_0}\mu(t,s)dt$ one can define
    \begin{align}
        \bar X_n(s)=\frac{1}{nx_0}\sum_{j=1}^{\floor{x_0n}}X_j(s)
    \end{align}
    for which we have 
    \begin{align}
        \Big \|\bar X_n(s)-{1\over x_0 } \int_0^{x_0}\mu(t,s)dt\Big \|_\infty\leq \Big  \|\frac{1}{nx_0}\sum_{j=1}^{\floor{nx_0}}\epsilon_j(s)\Big  \|_\infty +O(n^{-1}) \leq O_\p(n^{-1/2})
    \end{align}
    by the uniform continuity of $\mu(t,s)$ and Theorem 2.1 from \cite{dette2020} (our assumptions are sufficient for the proof of 
    Theorem 2.1 in this reference).  
    \end{itemize}
}  
    
\end{example}

\subsection{A Bootstrap test for gradual changes}

\label{sec1.3}
As the stochastic expansion of $\hat d_{\infty,n}$ will depend sensitively on certain extremal sets of $\mu(t,s)-g_\mu(s)$ we need some further notation before we can formally state the bootstrap procedure and a corresponding result guaranteeing its validity. To this end we define for any $\rho\geq 0$ the sets
\begin{equation}
\label{p201}
\begin{split}
    \hat{\mathcal{E}}^\pm_{\rho}&=\big\{(t,s) \in I_n\times[0,1] \big|  \pm(\tilde \mu_{h_n}(t,s)-\hat g_n(s)) \pm \rho \geq \|\tilde \mu_{h_n}(\cdot,\cdot)-\hat g_n(\cdot)\|_{\infty,I_n\times[0,1]}\big\}   \\
\mathcal{E}^\pm_{\rho}&=\big\{(t,s) \in I_n\times[0,1] \big|  \pm(\mu(t,s)-g_\mu(s)) \pm \rho \geq \|\mu(\cdot,\cdot)-g_\mu(\cdot)\|_{\infty,I_n\times[0,1]}\big\}   ~, 
\end{split}
\end{equation}
and \begin{align}
    &\mathcal{E}:=\mathcal{E}^+_0\cup \mathcal{E}^-_0 ,~
     \hat{\mathcal{E}}:= \hat{\mathcal{E}}^+_0\cup \hat{\mathcal{E}}^-_0~.
\end{align} 
With these notations we can derive a   stochastic approximation of  the statistic $\hat d_{\infty,n}$ that will be essential for the development of a bootstrap procedure to obtain valid quantiles for the distribution of $\hat d_{\infty,n}$. Defining
$$
d_{\infty,n}=\sup_{(t,s)\in I_n\times [0,1]}|\mu(t,s)-g_\mu(s)|
$$ 
we obtain the following result, which is proved in the appendix.
\begin{theorem}
\label{pt2}
  Let Assumptions (A1)-A(6) be satisfied and denote by $\rho_n$  any sequence such that $\rho_n^{-1}=o\big ((nh_n)^{1/2}h_n^{(3+\alpha^{-1})/J}\big )$. We then have with probability converging to $1$ that
   \begin{align}
   \label{p33}
       \hat d_{\infty,n}-d_{\infty,n} &\leq  \sup_{(t,s) \in \mathcal{E}_{\rho_n}}\Big|\frac{1}{nh_n}\sum_{j=1}^n\epsilon_j(s)K^*_{h_n}(j/n-t)\Big|+ o_\p(n^{-\gamma_2}(nh_n)^{-1/2}) \\
       & \leq \hat d_{\infty,n}-d_{\infty,n}+O(\rho_n)~, \nonumber 
   \end{align}   
   where $\mathcal{E}_{\rho_n}$ is defined in \eqref{p201} and the kernel $K^*$ is given by $K^*=2\sqrt{2}K(\sqrt{2}x)-K(x)$.
\end{theorem}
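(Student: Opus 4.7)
The plan is to prove the approximation in two stages: first a uniform stochastic expansion of the estimator $\tilde\mu_{h_n}$, and second a localization argument that confines the relevant supremum to the near-extremal set $\mathcal{E}_{\rho_n}$.

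\textbf{Stochastic expansion.} I would begin by establishing that, uniformly over $(t,s) \in I_n \times [0,1]$,
\begin{align*}
    \tilde\mu_{h_n}(t,s) - \mu(t,s) = S_n(t,s) + R_n(t,s),
\end{align*}
where $S_n(t,s) := (nh_n)^{-1} \sum_{j=1}^n \epsilon_j(s) K^*_{h_n}(j/n - t)$ and $\sup_{t,s}|R_n(t,s)| = o_\p(n^{-\gamma_2}(nh_n)^{-1/2})$. The kernel $K^*$ emerges directly from the Schucany--Sommers combination \eqref{p5}: expanding $\hat\mu_{h_n}$ and $\hat\mu_{h_n/\sqrt{2}}$ via the local linear normal equations on the interior $I_n$, the leading $h^2$ bias coefficient $c_K \cdot \partial_t^2 \mu(t,s)$ is identical for both pilots and so cancels in the linear combination $2\hat\mu_{h_n/\sqrt{2}} - \hat\mu_{h_n}$; by (A5) the residual bias is $O(h_n^3)$, which is $o_\p(n^{-\gamma_2}(nh_n)^{-1/2})$ under the bandwidth constraint $\gamma_1 > 1/7$ from (A4) for $\gamma_2$ small enough. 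The stochastic parts of the two pilots combine into the single partial-sum process $S_n$ with effective kernel $K^* = 2\sqrt{2}K(\sqrt{2}\,\cdot) - K(\cdot)$. Invoking (A6) for the benchmark estimator then yields, uniformly on $I_n \times [0,1]$,
\begin{align*}
    (\tilde\mu_{h_n} - \hat g_n)(t,s) - (\mu - g_\mu)(t,s) = S_n(t,s) + o_\p\bigl(n^{-\gamma_2}(nh_n)^{-1/2}\bigr).
\end{align*}

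\textbf{Localization.} The second input is a uniform maximal bound of the form $\sup_{(t,s) \in I_n \times [0,1]} |S_n(t,s)| = O_\p\bigl((nh_n)^{-1/2} h_n^{-(3+\alpha^{-1})/J}\bigr)$. This is obtained by chaining: an $\varepsilon$-net in $s$ of mesh proportional to a power of $h_n$ interacts with the H\"older exponent $\alpha$ in (A3), while a comparable net in $t$ is tied to the kernel support of width $h_n$; the $J$-th moment control together with a $\beta$-mixing Bernstein/Rosenthal-type bound from (A2) dominate the resulting maxima. By the definition of $\rho_n$ this quantity is $o_\p(\rho_n)$. Consequently, for any $(t,s) \notin \mathcal{E}_{\rho_n}$ we have $|\mu(t,s) - g_\mu(s)| < d_{\infty,n} - \rho_n$, and hence
\begin{align*}
|\tilde\mu_{h_n}(t,s) - \hat g_n(s)| \leq d_{\infty,n} - \rho_n + \sup_{t,s}|S_n| + o_\p(\cdot) = d_{\infty,n} - \rho_n\bigl(1 + o_\p(1)\bigr),
\end{align*}
strictly below $d_{\infty,n}$ with probability tending to $1$. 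Evaluating the same estimator at any point of $\mathcal{E}_0 \subset \mathcal{E}_{\rho_n}$ gives $\sup_{\mathcal{E}_{\rho_n}}|\tilde\mu_{h_n} - \hat g_n| \geq d_{\infty,n} - \sup_{t,s}|S_n| + o_\p(\cdot)$, which dominates the previous display, so w.h.p.\ the overall supremum $\hat d_{\infty,n}$ is attained on $\mathcal{E}_{\rho_n}$.

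\textbf{Completing the bounds.} On $\mathcal{E}^+_{\rho_n}$ one has $\mu - g_\mu \in [d_{\infty,n} - \rho_n, d_{\infty,n}]$, so $\tilde\mu_{h_n} - \hat g_n > 0$ there with high probability and $|\tilde\mu_{h_n} - \hat g_n| = (\mu - g_\mu) + S_n + o_\p \leq d_{\infty,n} + S_n + o_\p$; the analogous bound with $-S_n$ holds on $\mathcal{E}^-_{\rho_n}$. Taking suprema and bounding $\pm S_n \leq |S_n|$ gives the first inequality in \eqref{p33}. The second inequality is the crude companion that follows by combining $\sup_{\mathcal{E}_{\rho_n}}|S_n| \leq \sup_{t,s}|S_n| = o_\p(\rho_n)$ with the reverse-triangle estimate $\hat d_{\infty,n} - d_{\infty,n} \geq -\sup_{t,s}|S_n| - o_\p(\cdot)$, so that both sides of the inequality are of order $O(\rho_n)$. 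The principal difficulty is the uniform expansion in the first stage: controlling the joint modulus of $R_n$ and of $S_n$ in both arguments under only $\beta$-mixing and H\"older-type smoothness of a $C([0,1])$-valued error process is precisely where (A2) and (A3) must be pushed, and it is presumably isolated in a supporting appendix lemma (the Lemma \ref{pl3} referenced in the text).
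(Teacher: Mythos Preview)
Your proposal is correct and follows essentially the same architecture as the paper: the proof there combines Lemma~\ref{pl3} (the uniform expansion $\tilde\mu_{h_n}-\mu=S_n+O(h_n^3+(nh_n)^{-1})$), Lemma~\ref{pl2} (the uniform bound $\sup|S_n|=O_\p\bigl((nh_n)^{-1/2}h_n^{-(3+\alpha^{-1})/J}\bigr)$ via chaining, exactly as you sketch), and Lemma~\ref{pl4} (the localization to $\mathcal{E}_{\rho_n}$). The only presentational difference is in the second inequality: you dispose of it by the crude observation that both $\sup_{\mathcal{E}_{\rho_n}}|S_n|$ and $|\hat d_{\infty,n}-d_{\infty,n}|$ are $o_\p(\rho_n)$, whereas the paper picks an explicit near-maximizer $(t_n,s_n)\in\mathcal{E}^+_{\rho_n}$ and tracks the constant to obtain the sharper $3\rho_n+1/n$; either argument suffices for the stated $O(\rho_n)$.
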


Before we can proceed defining our bootstrap procedure we need some further notation, let $q=q_n>r=r_n$ be two positive integers with $2(q+r)<n$, and define the sets
\begin{align*}
    &I_1=\{1,...,q\},\\
    &J_1=\{q+1,...,q+r\},\\
    &\quad \quad \quad \vdots \\
    &I_m=\{(m-1)(q+r)+1,...,(m-1)(q+r)+q\},\\
    &J_m=\{(m-1)(q+r)+q+1,...,m(q+r)\},\\
    &J_{m+1} = \{m(q+r),...,n\}
\end{align*}
where $m=m_n=\floor{n/(q+r)}$. In other words we decompose the sequence $\{1,...,n\}$ into large blocks $I_1,...,I_m$ of length $q$ separated by small blocks $J_1,...,J_{m}$ of length $r$ and a remaining part $J_{m+1}$.\\
Additionaly we introduce a grid on the square $I_n\times[0,1]$ with mesh width and length proportional to  $1/n$ and $1/n^{1/\alpha}$, respectively. We denote the set of grid points by $P$.  In particular we have that $|P|\simeq n^{1+1/\alpha}$. Take for any $(t,s)\in I_n\times [0,1]$ the closest  point in $P$ and denote it by $(t_P,s_P)$ (if there are multiple choose the one with minimal $l_1$-norm). Next we define for any $\rho >0$ the sets
\begin{align}
     &\mathcal{E}_{\rho,P}:=\{(t_P,s_P)|(t,s) \in  \mathcal{E}^+_{\rho}\cup \mathcal{E}^-_{\rho}\}\\
     &\hat{\mathcal{E}}_{\rho,P}:=\{(t_P,s_P)|(t,s) \in  \hat{\mathcal{E}}^+_{\rho}\cup \hat{\mathcal{E}}^-_{\rho}\}~.
\end{align} 
Finally, let  $\nu_1,...,\nu_m$ be independent standard normal distributed random variables. We can then define, for any $\rho \geq 0$, a block bootstrap version of the random variable
\begin{align}
    \sup_{(t,s) \in \hat{\mathcal{E}_{\rho}}}\Big|\frac{1}{\sqrt{nh_n}}\sum_{j=1}^n\epsilon_j(s)K^*_{h_n}(j/n-t)\Big|
\end{align} in \eqref{p33} by
\begin{align}
    \label{p21}
      \sup_{(t,s) \in \hat{\mathcal{E}}_{\rho,P}}\Big|\frac{1}{\sqrt{mqh_n}}\sum_{l=1}^m\nu_l\sum_{j \in I_l}\hat \epsilon_j(s)K^*_{h_n}(j/n-t)\Big|
\end{align}
where $\hat \epsilon_j(\cdot)=X_{n,j}(\cdot)-\tilde \mu(j/n,\cdot)$ denotes the residual at time $j/n$ $ (j=1,...n)$  and $\tilde \mu $ is the bias corrected local linear estimator defined by \eqref{p5}  . For the bootstrap to work we additionally also need the following mild assumption:
\begin{itemize}
    \item [\textbf{(A7)}] For each $n\in \mathbb{N} $ there exists  a pair $(t_n,s_n) \in \mathcal{E}_{\rho_n,P}$ such that 
    \begin{align}
        \frac{1}{nh_n}\sum_{l=1}^{m}\E\Big [\Big  (\sum_{j \in I_l}\epsilon_j(s_n)K^*_{h_n}(j/n-t_n))\Big  )^2\Big  ]\gtrsim 1
    \end{align}
\end{itemize}
\begin{remark} ~~ 
\label{pr1}
    \rm 
    \begin{enumerate}
        \item [(1)]
        The above condition essentially states that the average variance of the large bootstrap blocks can not degenerate uniformly over the grid $\mathcal{E}_{\rho_n,P}$.  For illustration consider the case where the error process is second order stationary and $K^*$ is constant and supported on  the interval $[-1,1]$.         
         Let 
         $$J =  \big  \{ \lfloor nt_n  \rfloor - \lfloor \tfrac{ nh_n }{2}  \rfloor  +1, 
         \lfloor nt_n  \rfloor - \lfloor \tfrac{ nh_n}{2}  \rfloor +2, \ldots  , \lfloor nt_n  \rfloor +  \lfloor \tfrac{  nh_n}{2}  \rfloor +1
           \big  \}   $$   
            denote the set of consecutive indices ``centered at $nt_n$  and of size $nh_n$'', then
        \begin{align}
            \frac{1}{nh_n}\sum_{l=1}^{m}\E\Big [\Big  (\sum_{j \in I_l}\epsilon_j(s_n)K^*_{h_n}(j/n-t_n))\Big  )^2\Big  ]&\simeq \frac{1}{mqh_n}\sum_{l=1}^{m}\E\Big [\Big  (\sum_{j \in I_l\cap J}\epsilon_j(s_n)\Big  )^2\Big  ]\\
            & \simeq \frac{1}{mh_n}\sum_{l=1}^m\text{Var}\Big(q^{-1/2}\sum_{j \in I_l\cap J}\epsilon_j (s)\Big)
        \end{align}        
     By construction of $J$   we have, for all but two indices $l \in \{1,...,m\}$, that either $I_l\cap J=I_l$ or $I_l \cap J=\emptyset$.  In particular 
      the number of sets $I_l$ with  $I_l \cap J=I_l$ is proportional to $mh_n$. Using this and  second order stationarity of the error process we have
        \begin{align}
           \frac{1}{nh_n}\sum_{l=1}^{m}\E\Big [\Big  (\sum_{j \in I_l}\epsilon_j(s_n)K^*_{h_n}(j/n-t_n))\Big  )^2\Big  ] \simeq \text{Var}\Big(q^{-1/2}\sum_{j=1}^q\epsilon_j(s)  \Big)
        \end{align}
     Lemma 1 from \cite{Bradley1997} then yields that the condition 
    \begin{align}
        \E\Big [\Big (\sum_{j=1}^n\epsilon_j(s)\Big )^2\Big ]\rightarrow \infty
    \end{align} suffices for (A7) to hold. 
    \item [(2)] There is considerable flexibility when it comes to the choice of grid $P$, in particular its mesh can also be coarser as long as a version of Lemma \ref{pl5} in the Appendix holds. This is particularly interesting when one considers functional data that, instead of being continuously  observed, is available only on a discrete grid with an additional error. To be precise consider, for instance, the situation where one observes data which can be modelled by 
    \begin{align}
    \label{p101}
        Y_{n,ij}=X_{n,j}(s_i)+\sigma(s_i)z_{ij} \quad 1 \leq i \leq N, 1 \leq j \leq n
    \end{align}
  Here $\sigma(\cdot)$ is a positive function on the unit interval, $X_{n,j}$ are the observations in \eqref{p1}, $\{s_1,...,s_N\}\subset [0,1]$ and $\{z_{ij}\}_{1 \leq i \leq N, 1 \leq j \leq n}$ is a collection of centered and independent observations, which is independent of $\{ X_{n,j}(s_i) \}_{1 \leq i \leq N, 1 \leq j \leq n}$. In other words we observe the time series only on a discrete grid subject to some additional random error. Assuming that the latent variables $X_{n,j}$ are sufficiently smooth and  that $\{s_i\}_{1 \leq i \leq N}$ becomes sufficiently dense as $N:=N(n)\rightarrow \infty$ one can adopt our methodology in a straightforward manner and we expect that an analogue to Theorem \ref{pt4} then holds. To be a little more concrete, in this case  one  has to use a grid whose elements are of the form 
    \begin{align}
        (j/n,s_i) \quad 1 \leq i \leq N, 1 \leq j \leq n
    \end{align} to calculate the bootstrap statistic $\eqref{p21}$. Similarly one can simply calculate the supremum $\hat d_{\infty,n}$ over this grid instead of the whole product $I_n \times [0,1]$. We emphasize that there is no need of  further smoothing the observations  in \eqref{p101} to apply our approach.
    \end{enumerate}
\end{remark}

In the following we denote by $\hat q^*_{1-\alpha}$ the $(1-\alpha)$-quantile of the statistic \eqref{p21}. Finally we define our test statistic by
\begin{align}
\label{p100}
    \hat T_{n,\Delta}=\sqrt{nh_n}(\hat d_{\infty,n}-\Delta),
\end{align}
and propose to reject the null hypothesis in \eqref{p3} whenever 
\begin{align}
\label{p92}
    \hat T_{n,\Delta}\geq q^*_{1-\alpha}~.
\end{align}
The main result of this section shows that this decision rule defines a consistent and asymptotic level $\alpha$ test for the hypotheses in \eqref{p3}.

\begin{theorem}
\label{pt4}
   Let Assumptions (A1)-(A7) be satisfied and let $\rho_n$ be any sequence with   $\rho_n^{-1}=o\big ((nh_n)^{1/2}h_n^{(3+1/\alpha )/J}\big )$. Additionally,  assume that for some constant $C>0$  there exists a constant $c>0$ such that
    \begin{align}            
        r/q \leq Cn^{-c}~, ~~
        qh_n^{-1/2}\leq Cn^{1/2-c}~, ~~
        qh_n^{-3-6/J}\leq Cn^{1-c}~, ~~
        m\beta(r)=o(1)~.
    \end{align} Then the following statements are true.
    \begin{itemize}
    \item [(i)] Under $H_0(\Delta)$ we have  
    \begin{align}
    \label{p44}
         \limsup_{n \to \infty}\p(\hat T_{n,\Delta}>q^*_{1-\alpha})\leq \alpha~.
    \end{align}    
    \item [(ii)] Under $H_1(\Delta)$ we have
    \begin{align}
    \label{p45}
        \lim_{n \to \infty}\p(\hat T_{n,\Delta}>q_{1-\alpha}^{*})=1.
    \end{align}
\end{itemize} 
\end{theorem}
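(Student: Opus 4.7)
The plan is to prove part (i) by combining the stochastic expansion from Theorem \ref{pt2} with a Gaussian approximation for the partial sum process on the extremal set, and then verifying that the block multiplier bootstrap in \eqref{p21} consistently reproduces the corresponding quantile. Part (ii) will follow from consistency of $\hat d_{\infty,n}$ for $d_\infty$ together with a logarithmic-order bound on the bootstrap quantile. Throughout I abbreviate
\begin{align}
S_n(t,s):=\frac{1}{\sqrt{nh_n}}\sum_{j=1}^n \epsilon_j(s)K^*_{h_n}(j/n-t), \qquad M_n:=\sup_{(t,s)\in \mathcal{E}_{\rho_n}}|S_n(t,s)|,
\end{align}
and denote by $\hat M_n$ the bootstrap statistic \eqref{p21} (so $q^*_{1-\alpha}$ is the conditional $(1-\alpha)$-quantile of $\hat M_n$).

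For (i), Theorem \ref{pt2} sandwiches $\sqrt{nh_n}(\hat d_{\infty,n}-d_{\infty,n})$ between $M_n+o_\p(1)$ and $M_n-O(\sqrt{nh_n}\rho_n)+o_\p(1)$, and the condition on $\rho_n$ makes $\sqrt{nh_n}\rho_n = o(1)$. Under $H_0(\Delta)$ we have $d_{\infty,n}\leq\Delta$, so $\hat T_{n,\Delta}\leq M_n+o_\p(1)$, and it remains to compare the distribution of $M_n$ with the conditional law of $\hat M_n$. I would carry this out in the following steps. First, discretize: using Assumption (A3) and the smoothness of $K$ from (A4), together with a chaining bound on $S_n$, show that replacing $\mathcal{E}_{\rho_n}$ by its grid version $\mathcal{E}_{\rho_n,P}$ changes $M_n$ by at most $o_\p(1)$ (this is where the mesh $1/n^{1/\alpha}$ is essential, analogous to Lemma~\ref{pl5}). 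Second, decompose the sum into the large blocks $I_l$ and small blocks $J_l$; Berbee's coupling under (A2) and the hypothesis $m\beta(r)=o(1)$ let us replace the $I_l$-sums by independent copies and the $J_l$-contribution is $o_\p(1)$ by the rate conditions on $q,r$. Third, apply a high-dimensional Gaussian approximation (of Chernozhukov--Chetverikov--Kato type) to the maximum of $|\cdot|$ over the $\simeq n^{1+1/\alpha}$-sized grid $\mathcal{E}_{\rho_n,P}$; the bandwidth constraints on $q, h_n$ make the effective dimension manageable and Assumption (A7) gives the nondegeneracy needed for anti-concentration. Fourth, for the bootstrap, write $\hat\epsilon_j=\epsilon_j+(\mu(j/n,\cdot)-\tilde\mu_{h_n}(j/n,\cdot))+(g_\mu-\hat g_n)$, whose last two terms are $o_\p((nh_n)^{-1/2+\gamma_2/2})$ uniformly by (A6) and standard local linear bias/variance; conditional on the data, $\hat M_n$ is the supremum of a centered Gaussian whose covariance matches that of the Gaussian approximation of $M_n$ up to $o_\p(1)$ errors controlled again by (A7). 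Finally, show $\hat{\mathcal{E}}_{\rho_n,P}\subseteq \mathcal{E}_{\rho_n/2,P}$ and vice versa with probability tending to $1$, using $\|\tilde\mu_{h_n}-\hat g_n -(\mu-g_\mu)\|_\infty=o_\p(\rho_n)$. Combining these via an anti-concentration inequality for Gaussian maxima (valid under (A7)) yields that the conditional cdf of $\hat M_n$ evaluated at $M_n$ is asymptotically uniform, which gives \eqref{p44}.

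For (ii), under $H_1(\Delta)$ there is $\eta>0$ such that $d_{\infty,n}\geq \Delta+\eta$ for $n$ large enough by uniform continuity of $\mu-g_\mu$ and $I_n\nearrow [x_0,x_1]$. Theorem \ref{pt2} gives $\hat d_{\infty,n}=d_{\infty,n}+O_\p((nh_n)^{-1/2})$, so $\hat T_{n,\Delta}\geq \sqrt{nh_n}\,\eta+O_\p(1)\to\infty$. It therefore suffices to show $q^*_{1-\alpha}=O_\p(\sqrt{\log n})$, which follows from a Borell--TIS bound applied to the Gaussian multiplier statistic $\hat M_n$ conditionally on the data, since the conditional variances are $O_\p(1)$ uniformly by (A7) and consistency of the residuals. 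The hard part of the whole argument is the high-dimensional Gaussian approximation in step three of (i): $\beta$-mixing (rather than independence or $\phi$-mixing), non-stationarity of the error process, and the data-dependent index set $\hat{\mathcal{E}}_{\rho_n,P}$ must all be accommodated simultaneously, and this is precisely where Assumptions (A2), (A3), (A7) and the technical constraints on $q,r,h_n$ in the hypothesis of the theorem are combined.
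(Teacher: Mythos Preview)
Your overall strategy matches the paper's proof closely: the upper bound from Theorem~\ref{pt2}, discretization to $\mathcal{E}_{\rho_n,P}$, Berbee coupling of the big blocks under $\beta$-mixing, a CCK-type Gaussian approximation, replacement of $\epsilon_j$ by $\hat\epsilon_j$, the extremal-set inclusion, and Gaussian anti-concentration are exactly the ingredients the paper assembles (via Theorem~\ref{pt3} and Lemmas~\ref{pl5}--\ref{pl8}). Two points need correction, however.

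First, the claim that ``the condition on $\rho_n$ makes $\sqrt{nh_n}\rho_n=o(1)$'' is false: the assumption $\rho_n^{-1}=o\bigl((nh_n)^{1/2}h_n^{(3+1/\alpha)/J}\bigr)$ is equivalent to $\sqrt{nh_n}\,\rho_n\,h_n^{(3+1/\alpha)/J}\to\infty$, so in fact $\sqrt{nh_n}\rho_n\to\infty$. The lower inequality in Theorem~\ref{pt2} therefore gives you nothing at the $\sqrt{nh_n}$ scale. This does not damage part~(i), since you only need the \emph{upper} bound $\hat T_{n,\Delta}\le M_n+o_\p(1)$ under $H_0(\Delta)$, which is what you actually use; but drop the two-sided claim.

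Second, the inclusion ``$\hat{\mathcal{E}}_{\rho_n,P}\subseteq \mathcal{E}_{\rho_n/2,P}$'' is in the wrong direction and is generally false: enlarging the tolerance enlarges the set, so a perturbation of size $o_\p(\rho_n)$ only yields $\hat{\mathcal{E}}_{\rho_n,P}\subseteq \mathcal{E}_{c\rho_n,P}$ for some $c>1$. What you need (and what your ``vice versa'' actually says) is $\mathcal{E}_{\rho_n/2,P}\subseteq \hat{\mathcal{E}}_{\rho_n,P}$ with high probability. This is the inclusion the paper uses: it implies that the bootstrap quantile computed over the estimated set $\hat{\mathcal{E}}_{\rho_n,P}$ dominates the one over the oracle set $\mathcal{E}_{\rho_n/2,P}$, which is exactly the conservative direction required for \eqref{p44}. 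With these two fixes your argument goes through and coincides with the paper's.
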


\begin{remark} ~~
\label{pr2}
    \rm 
    An important question from a practical point of view is the choice of the threshold $\Delta > 0 $, which depends on the specific application under consideration.
    For example, When it comes to climate science, a typical threshold  is the 
    $1.5$ degrees Celsius of the global average surface temperature above pre-industrial temperatures.  On the other hand, if such information is difficult to obtain we 
    can  also determine a threshold from the data which can serve as  measure of evidence  for a  deviation of $\mu$ from  the benchmark function $g_\mu$
   with a controlled type I error  $\alpha$.
   
    To be precise, note that the hypotheses $H_0(\Delta)$ in \eqref{p3} are nested, that the test statistic \eqref{p100} is monotone in $\Delta$ and that the quantile  $q^*_{1-\alpha}$ does not depend on $\Delta$. Consequently,  rejecting $H_0(\Delta)$ for $\Delta=\Delta_1$ also implies rejecting $H_0(\Delta)$ for all $\Delta<\Delta_1$. The sequential rejection principle then yields that we may simultaneously test the hypotheses \eqref{p3} for different choices of $\Delta\geq 0$ until we find the minimum value $\hat \Delta_\alpha$  for which $H_0(\Delta_0)$ is not rejected, that is 
    \begin{align}
        \hat \Delta_\alpha:=\min \big \{\Delta \ge 0 \,| \, T_{n,\Delta}\leq q^*_{1-\alpha} \big  \}=\big(\hat d_{\infty,n}-q^*_{1-\alpha}(nh_n)^{-1/2}\big)\lor 0~.
    \end{align}
    Consequently, one may postpone the selection of $\Delta$ until one has seen the data. %In particular, $\hat \Delta_\alpha$ may be interpreted as a measure of evidence against compliance of $\mu$ with the benchmark function $g_\mu$.
\end{remark}

\section{The first time of a relevant deviation }
\label{sec3}
  \def\theequation{3.\arabic{equation}}	
  \setcounter{equation}{0}
  
In this section we will develop estimators for the time point 
\begin{align}
\label{det201}
    t^*(s,\Delta):=\inf\{t \in [x_0,x_1] \ | \  |\mu(t,s)-g_\mu(s)|\geq \Delta \}~,
\end{align}
where the sequence of functions deviates for the first time by more than $\Delta$ from the benchmark $g_\mu$ at a given argument $s \in [0,1]$. Moreover we also construct an estimator for the first time point
\begin{align} \label{det200}
    t^*(\Delta)  :=\inf_{s \in [0,1]}t^*(s,\Delta)=\inf\{ t \in [x_0,x_1]  \ | \ \|\mu(t,\cdot)-g_\mu(\cdot)\|_\infty \geq \Delta \} , 
\end{align}
where there is a deviation by more than $\Delta$ at  least at. one point in the interval $[x_0,x_1]$. Here we use the convention that $\inf \emptyset = \infty$. First note that the continuity of the function $d(t,s):=\mu(t,s)-g_\mu(s)$ yields the representation
\begin{align}
\label{p60}
    t^*(s,\Delta)=x_0+\int_{x_0}^{x_1}\1\Big (\max_{t \in [x_0,y]}|d(t,s)|<\Delta\Big ) dy+\infty \cdot \1 (d_\infty <\Delta )
\end{align}
which naturally leads to the estimator 
\begin{align}
\label{p61}
    \hat t^*(s,\Delta)=&x_0\lor h_n +\int_{I_n}\1\Big (\max_{t \in [x_0 \lor h_n,y]}|\hat d(t,s)|<\Delta-\delta_n\Big )dy \\
     & \quad  +\infty \cdot \1  (\hat d_{\infty,n} <\Delta-\delta_n )
\end{align}
where $\hat d(t,s)=\tilde \mu_{h_n}(t,s)-\hat g_n(s)$ and $\delta_n$ is a sequence converging to 0.

As the relation of the integrated indicators in \eqref{p60} and \eqref{p61} depends on the smoothness of $d(t,s)$ in $t^*(\Delta)$ we define a local modulus of continuity of $d(t^*(\Delta),s)$ to capture its degree of smoothness. To be precise for a fixed $s \in [0,1]$ we call an increasing function $\omega(\cdot,s)$ a local modulus of continuity of $d(\cdot,s)$ at $t^*(\Delta)$ if
\begin{align}
    |d(t^*(\Delta),s)-d(t,s)|\leq \omega(|t^*(\Delta)-t|,s)
\end{align}
for all $s \in [0,1]$ and $t$ with $|t^*(\Delta)-t|<\delta_n/2$. Note that such a modulus always exists by the uniform continuity of $d(t,s)$. Now we can formally state the properties of $\hat t^*(s,\Delta)$.

\begin{theorem}
\label{pt5}
    Let Assumptions (A1)-(A6) be satisfied, $\delta_n$ be such that \\ $\delta_n^{-1}=o\big ((nh_n)^{1/2}h_n^{(3+\alpha^{-1})/J}\big )$ and, for a fixed $s \in [0,1]$, let $\omega(\cdot,s)$ be a local modulus of continuity of $d(\cdot,s)$ at the point $t^*(\Delta)$. Defining 
    \begin{align}
        S=\big \{s \in [0,1] \Big| t^*(s,\Delta)<\infty\big \}
    \end{align} we have
    \begin{align}
    \label{p202}
        &\p\Big(\forall s \in S : \hat t^*(s,\Delta) \geq t^*(\Delta,s)-\omega^{-1}(\delta_n/2,s)\lor h_n \Big)=1-o(1)\\
        &\p\Big(\forall s \in S : \hat t^*(s,\Delta) \leq t^*(\Delta,s)+ h_n \Big)=1-o(1)        ~.
    \end{align}
    In particular, it holds that
    \begin{align}
    \label{p65}
        \norm{\hat t^*(\cdot,\Delta)-t^*(\cdot,\Delta)}_{\infty,S}=o_\p(1)~, 
    \end{align}
    and for all $s \in [0,1]\setminus S$ we have 
    \begin{align}
    \label{p64}
        \p(\hat t^*(s,\Delta)<\infty)=o(1)~.
    \end{align}
    In the case that $S=\emptyset$ we can even extend this to
    \begin{align}
        \p(\exists s : \hat t^*(s,\Delta)<\infty)=o(1)~.
    \end{align}
\end{theorem}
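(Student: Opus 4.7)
The plan is to reduce the problem to a uniform bound on the plug-in error $\|\hat d - d\|_{\infty, I_n \times [0,1]}$, where $\hat d(t,s) := \tilde \mu_{h_n}(t,s) - \hat g_n(s)$ and $d(t,s) := \mu(t,s) - g_\mu(s)$, and then translate this bound into pathwise comparisons of the level sets that define $\hat t^*$ and $t^*$. I would first establish $\|\hat d - d\|_{\infty, I_n \times [0,1]} = o_\p(\delta_n)$: under (A5) the Jackknife-corrected local linear estimator $\tilde \mu_{h_n}$ has bias $O(h_n^3)$ and uniform stochastic error $O_\p\bigl((nh_n)^{-1/2}h_n^{-(3+\alpha^{-1})/J}\bigr)$ (via the chaining/grid argument underlying the proof of Theorem \ref{pt2} and Lemma \ref{pl3}), while (A6) handles the benchmark estimator. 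The rate assumption $\delta_n^{-1} = o\bigl((nh_n)^{1/2} h_n^{(3+\alpha^{-1})/J}\bigr)$ makes all three terms negligible, and the event $A_n := \{\|\hat d - d\|_{\infty, I_n \times [0,1]} \leq \delta_n/2\}$ has probability tending to $1$.

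All subsequent arguments take place on $A_n$. The pointwise bound $\bigl| |\hat d(t,s)| - |d(t,s)| \bigr| \leq \delta_n/2$ yields the inclusion chain
\begin{align*}
\bigl\{y : \sup_{t \leq y}|d(t,s)| < \Delta - \tfrac{3}{2}\delta_n\bigr\} \subset \bigl\{y : \sup_{t\leq y}|\hat d(t,s)| < \Delta - \delta_n\bigr\} \subset \bigl\{y : \sup_{t\leq y}|d(t,s)| < \Delta - \tfrac{1}{2}\delta_n\bigr\},
\end{align*}
and by the integral representation $\hat t^*(s,\Delta)$ coincides (up to the boundary clipping) with the supremum of the middle set. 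For the upper bound, fix $s \in S$; continuity and the infimum definition give $|d(t^*(s,\Delta),s)| = \Delta$. If $t^*(s,\Delta) \geq h_n$, then for any $y \geq t^*(s,\Delta)$ the $A_n$-bound forces $|\hat d(t^*(s,\Delta),s)| \geq \Delta - \delta_n/2 > \Delta - \delta_n$, so the indicator in the definition of $\hat t^*$ vanishes and $\hat t^*(s,\Delta) \leq t^*(s,\Delta)$; the boundary case $t^*(s,\Delta) < h_n$ is absorbed into the $+h_n$ buffer. For the lower bound, using the attained form of the local modulus $\omega(r,s) := \sup_{|t-t^*(\Delta)|\leq r}|d(t,s) - d(t^*(\Delta),s)|$, one shows that for every $y$ strictly below $(t^*(s,\Delta) - \omega^{-1}(\delta_n/2,s)) \lor h_n$ the left inclusion forces $\sup_{t \leq y}|\hat d(t,s)| < \Delta - \delta_n$, i.e.\ the indicator equals $1$ on $[x_0 \lor h_n, y]$.

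Combining both inequalities gives $|\hat t^*(s,\Delta) - t^*(s,\Delta)| \leq \omega^{-1}(\delta_n/2,s) \lor h_n$ uniformly on $S$ on the event $A_n$. Under (A5) $d$ is uniformly Lipschitz in $t$, so $\omega$ may be chosen as $\omega(r,s) = Lr$, giving $\omega^{-1}(\delta_n/2,s) = \delta_n/(2L) \to 0$ uniformly in $s \in S$; together with $h_n \to 0$ this establishes \eqref{p65}. Finally, when $S = \emptyset$, compactness and continuity force $d_\infty < \Delta$ strictly, and Theorem \ref{pt2} together with the first step yield $\hat d_{\infty,n} = d_\infty + o_\p(1)$, so $\p(\hat d_{\infty,n} < \Delta - \delta_n) \to 1$; the $\infty$-term in the definition of $\hat t^*(s,\Delta)$ then activates for every $s$ simultaneously, which yields \eqref{p64} for any individual $s \notin S$ as a special case and the claimed uniform strengthening when $S = \emptyset$.

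I expect the main obstacle to be the lower bound step: the local modulus as literally stated is only an upper bound on $|d(t^*,s) - d(t,s)|$, whereas the argument really needs an upper bound on $\sup_{t\leq y}|d(t,s)|$ for $y$ sufficiently far below $t^*(s,\Delta)$. The gap is bridged by adopting the attained form of $\omega$ (which exists by uniform continuity of $d$) and carefully reconciling the $\delta_n/2$ slack coming from $A_n$ with the $\delta_n$ margin inside the indicator threshold; this bookkeeping is what pins down the precise constant inside $\omega^{-1}(\delta_n/2,s)$.
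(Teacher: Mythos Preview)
Your proposal is correct and follows essentially the same route as the paper: both reduce everything to the uniform bound $\|\hat d-d\|_{\infty,I_n\times[0,1]}=O_\p\big((nh_n)^{-1/2}h_n^{-(3+\alpha^{-1})/J}\big)=o_\p(\delta_n)$ (this is exactly Lemma~\ref{pl2} in the paper, which packages your bias, stochastic and benchmark pieces), and then argue pathwise on the integral representations \eqref{p60}--\eqref{p61}, using the identity $|d(t^*(s,\Delta),s)|=\Delta$ for the upper bound and the local modulus for the lower bound, with the $s\notin S$ case handled by the same $\hat d_{\infty,n}-d_{\infty,n}=o_\p(1)$ argument. The obstacle you flag in your last paragraph is genuine and the paper's proof treats it with the same informal bookkeeping you describe; your write-up is, if anything, more explicit about where the $\delta_n/2$ slack from $A_n$ has to be reconciled with the $\delta_n$ threshold.
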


\begin{remark}
    \rm   Equation \eqref{p65} is a direct consequence of the estimates \eqref{p202} as we can always choose $\omega(\cdot,s)$ to be independent of $s$ due to the uniform continuity of the function $d(t,s)$. The above statement of Theorem \eqref{pt5} merely allows for sharper estimates in   cases where  $d(t,s)$ is particularly smooth at $t^*(\Delta)$ for certain choices of $s$. Further we note that \eqref{p65} implies that $t^*(\Delta)=\inf_{s \in [0,1]}t^*(s,\Delta)$ is consistently estimated by $\hat t^*(\Delta)=\inf_{s \in [0,1]}\hat t^*(s,\Delta)$.       
  
\end{remark}

\section{Finite sample properties}
\label{sec4}
  \def\theequation{4.\arabic{equation}}	
  \setcounter{equation}{0}
In this section we will investigate the finite sample properties of the proposed procedure by means of a simulation study. We also illustrate the new methods by an application to a real data set. 

\subsection{Simulation study}
We consider two different choices for $\mu$ in model \eqref{p1} which are given by 

\begin{align}
    \label{p80} 
    &\mu_1(t,s)=\begin{cases}
        s(1-s) \quad \ &0 \leq t\leq \frac{1}{8}\\
        s(1-s)+2\sin(\pi(t-1/4)) \quad \ & \frac{1}{8}  \leq t \leq \frac{5}{8}\\
        s(1-s)+2\sin(\pi(t-1/4))-2s(1-s)(t-3/4) \quad \ & \frac{5}{8} < t\leq 1
    \end{cases} ~~~~~~~\\
    &\mu_2(t,s) =\begin{cases}
        4+f(s)+t(1-t)   \qquad & 0 \leq t\leq \frac{1}{4} \\
        4+f(s)+t(1-t)+s^2(t-1/4)^2  \quad  & \frac{1}{4}\leq t\leq 1 
    \end{cases}   
    \label{p80a}
\end{align}
where the function $f$ in the definition of $\mu_2$ is given by 
\begin{align*}
    f(t)= \Big [ {1+\Big(\frac{1-t}{t}\Big)^2} 
    \Big ]^{-1} \quad 0<t\leq 1~.
\end{align*}
For the function $\mu_1$ the  corresponding benchmark function is chosen as $g_{\mu_1} (s) = \mu_1 (0,s)$ (see  Example \ref{pex1}(i)). For the function $\mu_2$ we use the benchmark function in Example \ref{pex1}(ii) where  we choose $x_0=1/4$, that is 
\begin{align}
    g_{\mu_2}(s)=4\int_{0}^{1/4}\mu(t,s)dt~.
\end{align}
For both choices of $\mu$ we consider two different error processes in model \eqref{p1}, that is 
\begin{align}
\label{p81}
    &\epsilon_j=\frac{1}{2}B_j\\
    \label{p82} 
    &\epsilon_j=\frac{1}{\sqrt{5}}(B_j+\frac{1}{2}B_{j-1})~,
\end{align}
where   $B_j=\{B_j(s)\}_{s \in [0,1]}, 1 \leq j \leq n$ are independent Brownian bridges. 

To choose the bandwidth parameter $h_n$ for the calculation of the estimator $\tilde \mu_{h_n}$ we perform $k-$fold cross validation with $k=10$, see for instance \cite{Hastie2009} page 242. To be precise we use we use  the following algorithm:
%which is the functional equivalent to the algorithm used in \cite{buecher21} for the same problem in the univariate case.

\begin{enumerate}
    \item[\textbf{(1)}] Split the data into $k=10$ sets  $S_1,...,S_{10}$ of the same length
    \item[\textbf{(2)}] For $h_n=1/n$ and each set $S_i$ calculate the estimator $\tilde \mu_{h_n}^{(i)} $ based on the data in the remaining sets
    \item[\textbf{(3)}] Compute the prediction error \begin{align}
        \text{MSE}_{h_n}=\frac{1}{1-h_n/2}\sum_{i=1}^{10}\sum_{j \in S_i}\norm{X_{n,j}(\cdot)-\tilde \mu_{h_n}^{(i)}(j/n,\cdot)}_2^2
    \end{align}
    \item[\textbf{(4)}] Repeat steps (2) and (3) for $h_n=2/3n^{-1/5},...,n^{-1/5}$
    \item[\textbf{(5)}] Choose the bandwidth $h_n$ that minimizes $\text{MSE}_{h_n}$.
\end{enumerate}

The big block length parameter $q$ is selected by the method proposed in \cite{Rice2017} with initial bandwidth $q=\floor{n^{1/5}}$ which requires the choice of a weight function. We have implemented their algorithm for the Bartlett, Parzen, Tukey-Hanning and Quadratic spectral kernel and observed that the quadratic spectral weight function yields the most stable results (these findings are not displayed for the sake of brevity). The small block length parameter is chosen as $r=\ceil{n^{1/10}}$. We further choose $\rho_n=0.1\log(n)/\sqrt{nh_n}$ which matches a similar choice for the estimation of extremal sets in 
 \cite{buecher21}.

\begin{figure}[t]
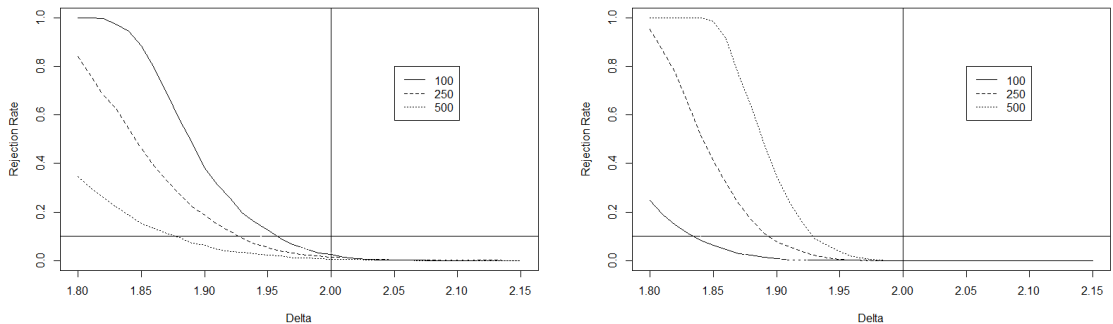

    \centering

    \includegraphics[scale= 0.35]{./images/init, ind.png}  
    \includegraphics[scale= 0.35]{./images/init, dep.png}  
    
    \caption{\it    
     Empirical rejection probabalities of the test \eqref{p92} for the hypotheses \eqref{p3} for different  $\Delta$ and  sample sizes $n=100, 250, 500$. The mean function is given by \eqref{p80} and the error processes by \eqref{p81} (left) and \eqref{p82} (right).
     The benchmark function     $g_\mu$ is given by \eqref{det10} and  $d_\infty=2$ in all cases. 
 } 
    \label{Fig:3}
\end{figure}

\begin{figure}[H]
    \centering

    \includegraphics[scale= 0.35]{./images/mean, ind.png}  
    \includegraphics[scale= 0.35]{./images/mean, dep.png}  
    
    \caption{\it  
       Empirical rejection probabalities of the test \eqref{p92} for the hypotheses \eqref{p3} for different  $\Delta$ and sample sizes $n=100, 250, 500$. The mean function is given by \eqref{p80a} and the error processes by \eqref{p81} (left) and \eqref{p82} (right).
     The benchmark function     $g_\mu$ is given by \eqref{det11} with $x_0=0.25$ and  $d_\infty=0.4585$ in all cases.
 } 
    \label{Fig:4}
\end{figure}
The results of this study are presented in Figures \ref{Fig:3} and \ref{Fig:4}, which show the
empirical rejection probabilities of the test \eqref{p92} for the hypotheses \eqref{p3}  with nominal level $\alpha=0.1$  and varying thresholds $\Delta$.
The vertical line corresponds to the true value $d_\infty $, which is 
$
d_\infty = \max_{s,t\in [0,1] } | \mu_1 (t,s) - \mu_1 (0,s)| \approx 2
$
for the function $\mu_1 $ and 
$
    d_\infty = \max_{s,t \in [0,1]}\big|\mu_2(t,s)-4\int_0^{1/4}\mu_2(t,s)dt\big| \approx 0.4585 $
    for the function $\mu_2$.
The results are obtained  by $1000$ simualtion runs, where we performed  $200$ bootstrap repetitions in each run to calculate the critical value of the test.

%Despite the more complicated setting the results are comparable to the simulation study in \cite{buecher21} for the univariate case where both an asymptotic and a bootstrap test were considered. 
We observe that the rejection probabilities are decreasing with an increasing threshold $\Delta$, which reflects the fact that it is easier to detect
deviations for smaller thresholds. 
Note that small values of $\Delta$ correspond to the alternative and that  the power of the test quickly increases as $\Delta $ is decreasing. On the other hand, values of $\Delta \geq d_\infty$ correspond to the null hypothesis and the test keeps its nominal level. These results  confirm the asymptotic  theory in  Theorem \ref{pt4}.
A comparison of the results for the independent case (left part of Figure \ref{Fig:3}) and the dependent case (right part of Figure \ref{Fig:3}) shows  that in the setting \eqref{det10} the test \eqref{p92} is more conservative for dependent data. On the other hand,  in this case the rejection rates increases more quickly with increasing distance to the ``boundary'': $d_\infty = \Delta$ of the  hypotheses. When $g_\mu$ is instead given by \eqref{det11}  the test is also conservative, but different from the case
 \eqref{det10} it is generally more powerful for dependent data and its rejection rate also increases more sharply with increasing distance to the ``boundary'': $d_\infty = \Delta$ after it exceeds the nominal level.

\begin{remark}
    
\end{remark}

\begin{figure}[t]
    \centering
%\hspace*{-0.075\textwidth}
    %\includegraphics[scale = 0.45]{images/simulation_2cp.pdf}
    %\includegraphics[scale = 0.45]{images/simulation_3cp.pdf}
    \includegraphics[scale= 0.4]{./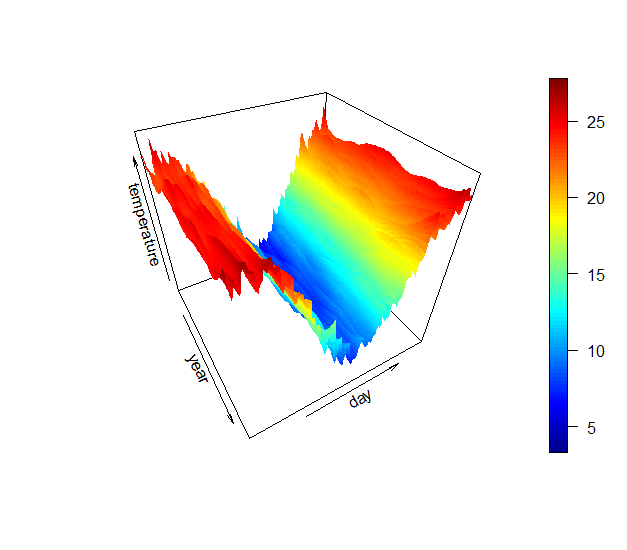}
    \includegraphics[scale= 0.4]{./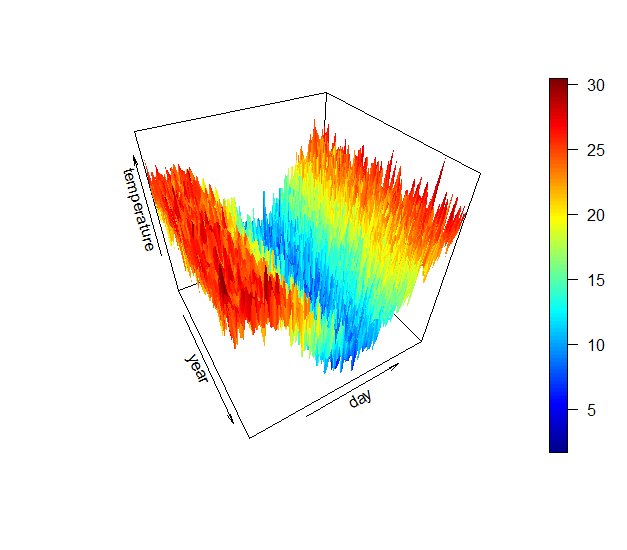}
    
    \caption{\it    
    Minimal temperature measured by a weather station in Boulia between 1888 and 2013.
 } 
    \label{Fig:1}
\end{figure}

\subsection{Real Data Example}

We applied the proposed methodology to annual temperature data from six weather stations in Canada (the data is available at www.bom.gov.au) which has also been considered in \cite{Aue2017} and \cite{buecher21}. We considered the daily minimal temperatures over a time span of roughly $100$-$150$ years, depending on the weather station. Exemplarily we display a histogram type plot of the data and its estimated mean function $\hat \mu_{h_n}$ for the weather station in Boulia in Figure \ref{Fig:1}. Here we excluded all years for which more than $10\%$ of the observations are  missing and replaced in the other cases  the missing observations by a spline interpolation.
In total $24$  years have been discarded due to a significant lack of observations and the resulting sample size is   $n=112$.

Even if the  boundary effects appearing in the  estimation of the  mean function $ t \to \mu (t,\cdot) $ are excluded, we observe that this type of  data cannot be 
well described by a model with a constant mean function $\mu (t,s) = \mu (s)$ or a piecewise constant mean function as discussed  in \cite{Aue2017} and \cite{bastian2023}. In fact it is apparent from  the figure that the functions $ \{ s \to \mu (t,s) \}_t$ are continuously changing with $t$ such that the model \eqref{p1} is more appropriate for this data.

In order to test for a significant deviation from  a benchmark function, we
 normalized (for each weather station) the data to be defined on the unit interval and applied the proposed methodology with 
\begin{align}
\label{det12}
    g_\mu(s)=\frac{1}{x_{0}}\int_0^{x_{0}}\mu(t,s)dt
\end{align}
where $x_{0}$ is chosen such that testing the hypotheses in \eqref{p3} corresponds to testing for a relevant deviation of the temperature in the second half of the 20th century from its historical average  up to the year $1950$.  To estimate $g_\mu$ we use the estimator suggested in \eqref{p95}. The parameters $h,q, \rho_n$ and $r$ are chosen in the same way as in the previous subsection.  We perform  $1000$ bootstrap repetitions to estimate the quantiles  $q^*_{1-\alpha}$ for the test \eqref{p92}.

In  the left part of  Table \ref{Tab:1}  we record for each weather station the largest  threshold $\hat \Delta_{0.1}$ for which the test \eqref{p92} rejects $H_0(\Delta)$ at nominal  level $\alpha=0.1$, see the discussion in Remark \ref{pr2}. For example, for the data from  the station Boulia the null hypothesis in \eqref{p3} is rejected for $\Delta =1.999$ (and all smaller thresholds) with nominal level $\alpha=0.1$.  The null hypothesis   cannot be rejected if $\Delta > 1.999$. The first year where this deviation is detected is the year $1967$.
We also include the estimates $ \hat t^*(\Delta)$  in \eqref{det200} for different choices of $\Delta$.
Estimators where the test \eqref{p92} also rejects the null hypothesis in \eqref{p3} are marked in  boldface. For example, a significant deviation by more than $1.5$ degrees Celsius from the pre-industrial temperatures is detected by the new method for the first time in the year $1956$.

For the sake of comparison we  also include the estimates $\hat t^*_{\rm uni} (\Delta)$ from Table 3 in  \cite{buecher21} which refer to a relevant deviation of more that $\Delta$ degree Celsius from the  pre-industrial time 
in  a univariate time series of the average temperatures in  the month July.     For example, at the station  Boulia the first years of a deviation of the  magnitude
$\Delta=0.5 $ and $1$ occurs in the years  $1957$ and  $1960$, respectively
and - in contrast to the functional approach presented in this paper -  the method 
of \cite{buecher21} does not detect  a deviation by more that $1.5$ degree Celsius from the univariate data. 

\begin{table}[h]
    \centering
\begin{tabular}{|c|c|c|c|c|c||c|c|c|}
\hline
& \multicolumn{5}{|c||}{our methods} & \multicolumn{3}{c|}{\cite{buecher21}} \\
\hline
     $\alpha = 0.1$ & $\hat \Delta_{0.1}$& $\hat t^*(\hat \Delta_{0.1})$ & $\hat t^*(0.5)$ & $\hat t^*(1)$ &$\hat t^*(1.5)$ &  $\hat t^*_{\rm uni}(0.5)$ & $\hat t^*_{\rm uni}(1)$ &$\hat t^*_{\rm uni}(1.5)$  \\
          \hline 
     Boulia & 1.999 &  \textbf{1967} & \textbf{1951} & \textbf{1951} &\textbf{1956} & \textbf{1957} & \textbf{1960} & $\infty$ \\ 
     Otway & 0.558 & \textbf{1951}  & \textbf{1951} &1965&$\infty$  & $\infty$ & $\infty$ & $\infty$\\
     Gayndah & 2.590 & \textbf{1969} & \textbf{1951} &\textbf{1951}&\textbf{1952} & \textbf{1951} &\textbf{1969} & \textbf{1974}\\ 
     Hobart & 0.745 & \textbf{1952} & \textbf{1951}&1959&1975& \textbf{1975}& $\infty$& $\infty$\\
     Melbourne & 1.410& \textbf{1956} & \textbf{1951}&\textbf{1951}&1958  & \textbf{1968} & \textbf{1978} & $\infty$\\
     Sydney & 1.164 &  \textbf{1961} & \textbf{1951}&\textbf{1955}&1970 & \textbf{1978} & $\infty$ & $\infty$\\
       \hline         
\end{tabular}
\caption{\it Second column: maximal threshold $\hat \Delta_{0.1}$ for which the null hypothesis in \eqref{p3} is  rejected  by the test \eqref{p92}. The benchmark function is  is given by the average minimal temperature up to the year $1950$, see \eqref{det12}. 
Column $3$-$6$:  the  estimate $\hat t^* (\Delta) $ defined in \eqref{p61} of  the time of the first relevant deviation where  $\Delta= \hat \Delta_{0.1}$ and $\Delta \in \{0.5,1,1.5\}$.
Column $7$-$9$:  Corresponding estimates  from a  univariate time series of the average temperatures in  the month July taken from  Table 3 in \cite{buecher21}.}
\label{Tab:1}
\end{table}

Comparing these results with those in Table 3 in \cite{buecher21} (see the right part of  Table \ref{Tab:1}) we note that the changes  detected by their univariate  method 
are always detected by the functional approach in this paper as well. Moreover, the new methods detects additional changes and in many cases it  detects the changes substantially earlier. These observations are a consequence of the fact that our functional approach does not require  summarizing the functional data  to a univariate time series, thereby avoiding a loss of power against alternatives where the increase in temperature is not spread out evenly over the whole year.

From a practical point of view one might conversely be concerned about detecting anomalies with a short duration instead of structural changes (for instance one might observe a sequence of particularly warm Januaries while the overall climate is stable during that time). We can address this problem by using the estimator  $ s \rightarrow \hat t^*(s,\Delta)$ 
in \eqref{det201}
instead of  $\hat t^*(\Delta)$  to obtain  a more detailed  understanding  of the first times of 
a relevant change, sidestepping the issue of possibly uninteresting short lived anomalies to some degree. To illustrate this we  discuss the Boulia weather station data. In Figure \ref{Fig:2} we  display  the functions $ s \rightarrow \hat t^*(s,\Delta)$  for multiple choices of $\Delta$. 
Here one can observe that for some parts of the year the estimate for the time of the first relevant deviation is dated substantially later than for other parts, indicating that different mechanism might be responsible for the increase in temperature, respectively. Here our methodology facilitates a very nuanced discussion of the changes in the data, providing an interesting alternative to the discussions in \cite{Aue2017} and \cite{buecher21}.

\begin{figure}[H]
    \centering
%\hspace*{-0.075\textwidth}
    %\includegraphics[scale = 0.45]{images/simulation_2cp.pdf}
    %\includegraphics[scale = 0.45]{images/simulation_3cp.pdf}
    \includegraphics[scale= 0.37]{./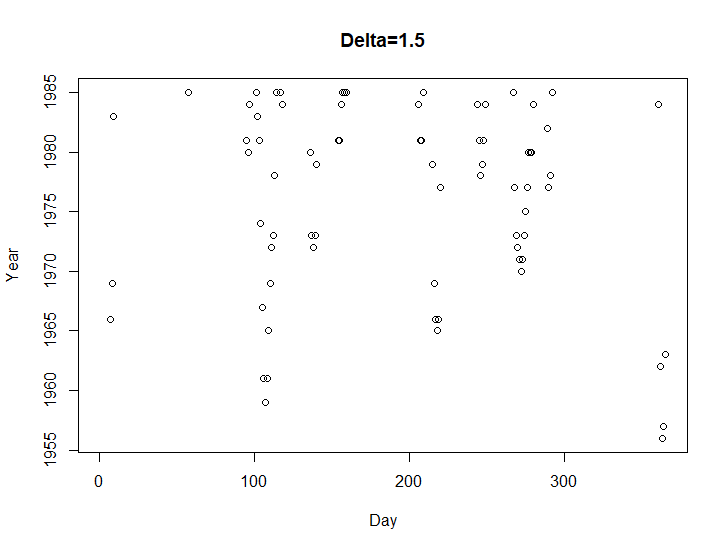}
    \includegraphics[scale= 0.37]{./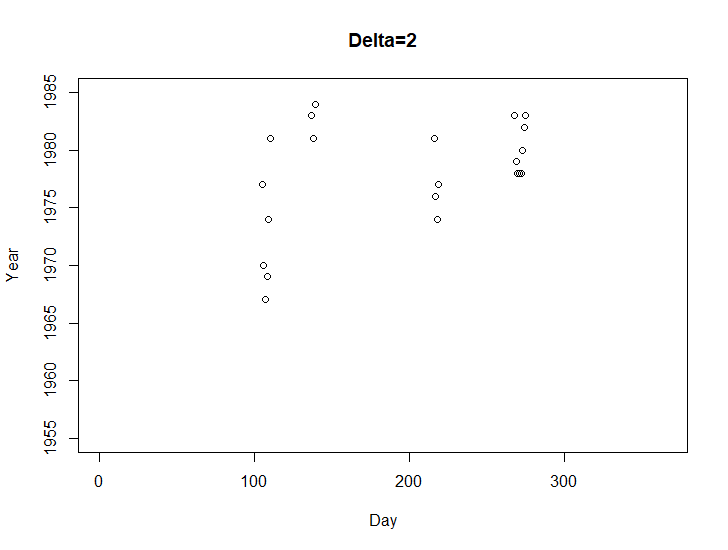}
    \caption{\it Plots of the estimator $\hat t^*(s,\Delta)$ defined in \eqref{p60} for the Boulia weather station data for different choices of $\Delta$.}
    \label{Fig:2}
\end{figure}

\section{Proofs}
\label{sec5}
  \def\theequation{5.\arabic{equation}}	
  \setcounter{equation}{0}
Throughout this section we will assume that Assumption (A1)-(A6) are satisfied. We also introduce some further terminology. 

We call a mean zero real valued random variable $X$ subgaussian with variance proxy $C^2$ if
\begin{align}
    \E[e^{tX}]\leq e^{-\frac{t^2}{2C^2}}~,
\end{align}
for all $t \in \R$.
For a given set $\mathcal{A}$ and a sigma field $\mathcal{Z}$ we call a real valued random variable $X$ subgaussian on $\mathcal{A}$ conditional on $\mathcal{Z}$ with variance proxy $C^2$ if
\begin{align}
     \E[e^{tX}|\mathcal{Z}](\omega) &\leq e^{-\frac{t^2}{2C^2}} \quad \forall \omega \in \mathcal{A}\\
     \E[X|\mathcal{Z}](\omega)&=0  \quad \forall \omega \in \mathcal{A}~,
\end{align}
for all $t \in \R$. 
Further  for two real valued sequences $(a_n)_{n \in \N}, (b_n)_{n \in \N}$ we use the notation  $ a_n \lesssim b_n$ if $a_n \leq Cb_n$ for some constant $C$ that does not depend on $n$ and that may change from line to line. In addition we write $a_n \simeq b_n$ if $a_n \lesssim b_n$ and $b_n \lesssim a_n$. 
We say that an event $\mathcal{A}$ holds with high probability if $\p(\mathcal{A})=1-o(1)$. Finally, 
to make expressions more concise, we also define the process 
\begin{align}
\label{p20}
    \big \{\mathcal{H}_n(t,s)\big \}_{(t,s) \in I_n \times [0,1]}:= \Big\{\frac{1}{\sqrt{nh_n}}\sum_{j=1}^n\epsilon_j(s)K^*_{h_n}(j/n-t) \Big \}_{(t,s) \in I_n \times [0,1]}~.
\end{align}

\subsection{Proof of Theorem \ref{pt2}}
The Theorem follows immediately by combining Lemmas \ref{pl3} and \ref{pl4}.

\begin{Lemma}
\label{pl3}
   We have that
    \begin{align}
        \sup_{ [t,s] \in I_n\times[0,1]}\Big|\tilde \mu_{h_n}(t,s)-\mu(t,s)-\frac{1}{nh_n}\sum_{j=1}^n\epsilon_j(s)K^*_{h_n}(j/n-t)\Big|= O(h_n^3+(nh_n)^{-1}) ,
    \end{align}
     where $K^*=2\sqrt{2}K(\sqrt{2}x)-K(x)$.
\end{Lemma}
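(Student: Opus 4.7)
The plan is to express the local linear estimator as a weighted sum with explicit weights depending on the kernel moments, replace those moments by their integral limits via Riemann approximation, and then analyze the deterministic (bias) and stochastic components separately before combining them through the Schucany--Sommers correction.

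First I would write
$$\hat\mu_{h_n}(t,s) \;=\; \sum_{j=1}^n w_{n,j}(t)\, X_{n,j}(s),\qquad w_{n,j}(t) \;=\; \frac{S_{n,2}(t) - (j/n - t)\,S_{n,1}(t)}{n\bigl(S_{n,0}(t)S_{n,2}(t) - S_{n,1}(t)^2\bigr)}\,K_{h_n}(j/n - t),$$
where $S_{n,k}(t) = n^{-1}\sum_{j=1}^n (j/n - t)^k K_{h_n}(j/n - t)$. For $t\in I_n$ the support of $K_{h_n}(\cdot - t)$ lies inside $[0,1]$, so a standard Riemann-sum estimate using (A4) will give, uniformly in $t\in I_n$, that $S_{n,0}(t) = h_n + O(1/n)$, $S_{n,1}(t) = O(h_n^2/n)$ (by symmetry of $K$), and $S_{n,2}(t) = h_n^3 \int v^2 K(v)\,dv + O(h_n^2/n)$. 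Substituting these into the weight formula reduces each weight to $w_{n,j}(t) = (nh_n)^{-1}K_{h_n}(j/n - t) + r_{n,j}(t)$ with a deterministic residual $r_{n,j}(t)$ that is uniformly of order $(n^2h_n^2)^{-1}$ and supported on $O(nh_n)$ indices.

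Next I would split $X_{n,j}(s) = \mu(j/n,s) + \epsilon_j(s)$ and Taylor-expand $\mu(j/n,s)$ around $t$ in its first argument:
$$\mu(j/n,s) = \mu(t,s) + (j/n - t)\,\tfrac{\partial}{\partial t}\mu(t,s) + \tfrac{1}{2}(j/n - t)^2\,\tfrac{\partial^2}{\partial t^2}\mu(t,s) + R_j(t,s),$$
where Assumption (A5) gives $|R_j(t,s)| \lesssim |j/n - t|^3$ uniformly in $s$. Since local linear fits reproduce affine functions exactly, the constant and linear pieces return $\mu(t,s)$ up to Riemann errors of order $h_n/n$; the quadratic piece produces the usual leading bias $\tfrac{h_n^2}{2}\,\tfrac{\partial^2}{\partial t^2}\mu(t,s)\int v^2 K(v)\,dv + O(h_n^2/n)$; and the cubic remainder contributes $O(h_n^3)$. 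The Schucany--Sommers correction $\tilde\mu_{h_n} = 2\hat\mu_{h_n/\sqrt 2} - \hat\mu_{h_n}$ then cancels the $h_n^2$ term exactly (using $2\cdot(h_n/\sqrt 2)^2/2 = h_n^2/2$), leaving an overall bias of $O(h_n^3)$ after applying the same expansion at both bandwidths.

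For the stochastic part, plugging in the leading-order weight approximation for both $h_n$ and $h_n/\sqrt 2$ yields
$$\tfrac{2\sqrt 2}{nh_n}\sum_{j=1}^n \epsilon_j(s)\,K\!\bigl(\sqrt 2\,(j/n-t)/h_n\bigr) - \tfrac{1}{nh_n}\sum_{j=1}^n \epsilon_j(s)\,K\!\bigl((j/n-t)/h_n\bigr) \;=\; \tfrac{1}{nh_n}\sum_{j=1}^n \epsilon_j(s)\,K^*_{h_n}(j/n - t),$$
which is precisely the expression appearing in the lemma. The remaining contribution $\sum_j \epsilon_j(s)\, r_{n,j}(t)$ from the weight approximation is controlled by the uniform bound on $|r_{n,j}|$ and the $O(nh_n)$ nonzero terms, combined with Assumption (A1) which yields $\max_{j\le n}\|\epsilon_j\|_\infty = O_\p(\log n)$; this gives $O_\p(\log n/(nh_n))$, absorbed into the claimed $O((nh_n)^{-1})$ up to a logarithmic factor. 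Uniformity over $(t,s)\in I_n\times[0,1]$ follows from the uniform-in-$t$ Riemann estimates and the pointwise-in-$s$ structure of the kernel sums. The main obstacle I anticipate is the careful bookkeeping of Riemann-approximation errors at each order so that the weight residual $r_{n,j}$ genuinely produces only an $O((nh_n)^{-1})$ contribution after summation against the errors, and verifying that the Schucany bias cancellation is uniform in $s$ under the Lipschitz condition (A5).
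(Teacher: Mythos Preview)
Your approach is essentially the one the paper invokes by citing Lemma C.2 of Dette, Kokot and Volgushev (2015): explicit local-linear weights, Riemann approximation of the kernel moments $S_{n,k}(t)$ on the interior $I_n$, a third-order Taylor expansion of $\mu$ in $t$ under (A5), and the Schucany--Sommers combination to cancel the $h_n^2$ bias term. The logarithmic factor you flag in the stochastic weight-remainder $\sum_j r_{n,j}(t)\epsilon_j(s)$ is a genuine but harmless looseness---the paper's $O((nh_n)^{-1})$ should be read as $O_\p$ here, and in every subsequent use of the lemma only $o_\p((nh_n)^{-1/2})$ is needed.
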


\begin{proof}

    Using (A5) combined with the same arguments as in the proof of Lemma C.2 from \cite{dette2015} yields the desired statement where the uniformity conditions in (A5) ensure that the errors in the Taylor expansions can also be bounded uniformly in $s$.

\end{proof}

\begin{Lemma}
\label{pl4}
   Let $\rho_n$ be any sequence such that $\rho_n^{-1}=o\big ((nh_n)^{1/2}h_n^{(3+\alpha^{-1})/J}\big )$. We then have that with probability tending to one that 
   \begin{align}
       \hat d_{\infty,n}-d_{\infty,n} &\leq \sup_{(t,s) \in \hat{\mathcal{E}}_{\rho_n}}|\tilde \mu_{h_n}(t,s)-\mu(t,s)|+o_\p(n^{-\gamma_2}(nh_n)^{-1/2})\\
       & \leq \hat d_{\infty,n}-d_{\infty,n} +O_\p(\rho_n) 
   \end{align}
   The same also holds true for $\hat{\mathcal{E}}_{\rho_n}$ replaced by $\mathcal{E}_{\rho_n}$.
\end{Lemma}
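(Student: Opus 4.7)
The plan is to base everything on the decomposition
\begin{align*}
\tilde\mu_{h_n}(t,s) - \mu(t,s) = \bigl[\hat D(t,s) - D(t,s)\bigr] + \bigl[\hat g_n(s) - g_\mu(s)\bigr],
\end{align*}
where I abbreviate $\hat D := \tilde \mu_{h_n} - \hat g_n$ and $D := \mu - g_\mu$. Assumption (A6) immediately absorbs the second bracket into the stated $o_\p(n^{-\gamma_2}(nh_n)^{-1/2})$ error throughout, so the work reduces to comparing $|\hat D - D|$ on $\hat{\mathcal{E}}_{\rho_n}$ with the deterministic quantity $\hat d_{\infty,n} - d_{\infty,n}$. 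As a preliminary, I would first establish the uniform bound $\|\hat D - D\|_\infty = o_\p(\rho_n)$ by combining Lemma \ref{pl3} (controlling the deterministic bias of $\tilde \mu_{h_n}$), (A6), and a chaining argument under (A2)--(A3) that controls $\|\mathcal{H}_n\|_\infty$; the hypothesis $\rho_n^{-1} = o((nh_n)^{1/2} h_n^{(3+1/\alpha)/J})$ is exactly what makes this hold. An immediate corollary is $|\hat d_{\infty,n} - d_{\infty,n}| \leq \|\hat D - D\|_\infty = o_\p(\rho_n)$, which I will use repeatedly.

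For the first inequality I would pick any maximizer $(t^*,s^*)$ of $|\hat D|$; by construction $(t^*,s^*) \in \hat{\mathcal{E}} \subset \hat{\mathcal{E}}_{\rho_n}$, and the elementary reverse triangle inequality $|\hat D(t^*,s^*)| - |D(t^*,s^*)| \leq |\hat D(t^*,s^*) - D(t^*,s^*)|$ together with the above decomposition gives the first bound directly. For the reverse inequality I would fix any $(t,s) \in \hat{\mathcal{E}}_{\rho_n}$ and, by symmetry, assume the ``$+$'' case, so that $\hat D(t,s) \geq \hat d_{\infty,n} - \rho_n$. The key observation is that $\|\hat D - D\|_\infty = o_\p(\rho_n)$ forces $D(t,s) \geq d_{\infty,n} - \rho_n - o_\p(\rho_n)$; combined with the trivial $D(t,s) \leq d_{\infty,n}$ this yields $d_{\infty,n} - D(t,s) \in [0,\rho_n + o_\p(\rho_n)]$. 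Writing
\begin{align*}
\hat D(t,s) - D(t,s) = \underbrace{\bigl(\hat D(t,s) - \hat d_{\infty,n}\bigr)}_{\in [-\rho_n,0]} + \bigl(\hat d_{\infty,n} - d_{\infty,n}\bigr) + \underbrace{\bigl(d_{\infty,n} - D(t,s)\bigr)}_{\in [0,\rho_n + o_\p(\rho_n)]}
\end{align*}
and taking absolute values, the preliminary estimate $|\hat d_{\infty,n} - d_{\infty,n}| = o_\p(\rho_n)$ lets me absorb any sign ambiguity and conclude $|\hat D(t,s) - D(t,s)| \leq (\hat d_{\infty,n} - d_{\infty,n}) + O_\p(\rho_n)$; adding $\|\hat g_n - g_\mu\|_\infty$ then yields the second inequality uniformly on $\hat{\mathcal{E}}_{\rho_n}$.

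To transfer the statement from $\hat{\mathcal{E}}_{\rho_n}$ to $\mathcal{E}_{\rho_n}$ I would use the sandwich $\mathcal{E}_{\rho_n/2} \subset \hat{\mathcal{E}}_{\rho_n} \subset \mathcal{E}_{2\rho_n}$, valid on an event of probability tending to one as another direct consequence of $\|\hat D - D\|_\infty = o_\p(\rho_n)$ together with $|\hat d_{\infty,n} - d_{\infty,n}| = o_\p(\rho_n)$. Since $2\rho_n$ and $\rho_n/2$ satisfy the same hypothesis as $\rho_n$, reapplying the estimates already obtained on $\hat{\mathcal{E}}_{2\rho_n}$ and $\hat{\mathcal{E}}_{\rho_n/2}$ gives the analogous sandwich with $\mathcal{E}_{\rho_n}$ at the cost only of an extra $O_\p(\rho_n)$ term that is already present. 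The main obstacle I expect is not the comparison arguments, which are essentially algebraic once the preliminaries are in hand, but precisely the preliminary maximal inequality $\|\mathcal{H}_n\|_\infty = o_\p((nh_n)^{1/2}\rho_n)$: for a non-stationary, $\beta$-mixing, $C([0,1])$-valued partial-sum process this requires combining the H\"older regularity from (A3), the moment order $J$ from (A2), and a careful chaining over both the $t$- and $s$-directions, and it is this calculation that dictates the exponent $(3+1/\alpha)/J$ appearing in the condition on $\rho_n$.
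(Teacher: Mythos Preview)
Your proposal is correct and follows essentially the same route as the paper. Both arguments hinge on the decomposition $\tilde\mu_{h_n}-\mu=(\hat D-D)+(\hat g_n-g_\mu)$, Assumption (A6) to absorb the second bracket, and the uniform bound $\|\hat D-D\|_\infty=o_\p(\rho_n)$ (which the paper records separately as Lemma \ref{pl2}, proved via the chaining bound you anticipate). For the first inequality the paper invokes $\mathcal{E}\subset\hat{\mathcal{E}}_{\rho_n}$ from Lemma \ref{pl2}, whereas you use the trivial inclusion $\hat{\mathcal{E}}\subset\hat{\mathcal{E}}_{\rho_n}$; your version is in fact the cleaner one here. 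For the second inequality the paper picks a near-maximizer $(t_n,s_n)$ of $|\hat D-D|$ on $\hat{\mathcal{E}}^+_{\rho_n}$ and bounds directly, while your three-term telescoping of $\hat D-D$ with explicit sign control via $|\hat d_{\infty,n}-d_{\infty,n}|=o_\p(\rho_n)$ is an equivalent repackaging of the same idea. Finally, for the transfer to $\mathcal{E}_{\rho_n}$ the paper simply says ``the same arguments'' apply, whereas your sandwich $\mathcal{E}_{\rho_n/2}\subset\hat{\mathcal{E}}_{\rho_n}\subset\mathcal{E}_{2\rho_n}$ is a legitimate (and arguably more transparent) way to carry this out. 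None of these differences is substantive.
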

\begin{proof}
    We only show the inequality involving the set $\hat{\mathcal{E}}_{\rho_n}$, the corresponding inequality for $\mathcal{E}_{\rho_n}$ follows by exactly the same arguments.\\
    
    First we define the quantities
    \begin{align}
        \Delta^+_{\rho_n}&:= \sup_{(t,s) \in \hat{\mathcal{E}}^+_{\rho_n}}|\tilde \mu_{h_n}(t,s)-\hat g_n(s)-(\mu(t,s)-g_\mu(s))|,\\
        \Delta^-_{\rho_n}&:= \sup_{(t,s) \in \hat{\mathcal{E}}^-_{\rho_n}}|\tilde \mu_{h_n}(t,s)-\hat g_n(s)-(\mu(t,s)-g_\mu(s))|,\\
        \Delta_{\rho_n}&:=\max\{ \Delta^+_{\rho_n},\Delta^-_{\rho_n}\}.
    \end{align}
    Note that 
    \begin{align}
    \hat d_{\infty,n}-d_{\infty,n} \leq  \Delta_{\rho_n}  
    \end{align}
    with high probability, 
    due to the fact that, by Lemma \ref{pl2}, we have $\mathcal{E} \subset \hat{\mathcal{E}}_{\rho_n}$ with high probability. Combining this observation with Assumption (A6) then yields the first inequality in the statement of the lemma. 
    For the second inequality we choose $(t_n,s_n) \in \hat{\mathcal{E}}^+_{\rho_n}$ such that 
    \begin{align}
      d_{\infty,n}- 3\rho_n &\leq   \mu(t_n,s_n)-g_\mu(t_n,s_n)~,\\
        \Delta^+_{\rho_n} - 1/n &\leq \tilde \mu_{h_n}(t_n,s_n)-\hat g_n(s_n)-(\mu(t_n,s_n)-g_\mu(s_n))~, 
    \end{align}
    where we the use Lemma \ref{pl2} to obtain the first inequality for any $(t,s) \in \hat{\mathcal{E}}^+_{\rho_n}$. From these inequalities we obtain  (with high probability)
    \begin{align}
        \Delta^+_{\rho_n} &\leq  \tilde \mu_{h_n}(t_n,s_n)-\hat g_n(s_n)-(\mu(t_n,s_n)-g_\mu(s_n)) + 1/n\\
        & \leq \sup_{(t,s) \in \hat{\mathcal{E}}^+_{\rho_n}}  |\tilde \mu_{h_n}(t_n,s_n)-\hat g_n(s_n)|  - d_{\infty,n}+1/n+3\rho_n~\\
        & \leq \hat d_{\infty,n}  - d_{\infty,n}+1/n+3\rho_n~,
    \end{align}
    where we again used that $\mathcal{E} \subset \hat{\mathcal{E}}_{\rho_n}$ holds with high probability to obtain the last line. A similar argument yields an analogous inequality for $\Delta^-_{\rho_n}$. Combining these two inequalities yields the second inequality  in the statement of the lemma by assumption (A6).

\end{proof}

\begin{Lemma}
\label{pl1}
    Let $\alpha$  denote the constant from Assumption (A3) and define
    \begin{align}
        d((t,s),(t',s')):=|s-s^\prime|^\alpha~+|t-t'|h_n^{-1}~,
    \end{align} then we have for any $\tau>0$ that
    \begin{align}
       \sup_{d((t,s),(t',s'))<\tau}|\mathcal{H}_n(t,s)-\mathcal{H}_n(t',s')| = O_\p(\tau h_n^{-1-(3+\alpha^{-1})/J})
    \end{align}
    
\end{Lemma}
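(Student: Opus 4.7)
The plan is to first establish a pointwise $L^J$ bound on the increments of $\mathcal{H}_n$ with respect to the pseudo-metric $d$, and then convert this into a uniform modulus-of-continuity bound via a chaining (or equivalently, a discretize-and-Markov) argument. The two main ingredients are Assumption (A3) for Hölder continuity in $s$, Assumption (A4) for Lipschitz continuity of $K^*_{h_n}$ in $t$, and the $\beta$-mixing moment inequalities enabled by (A2).

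\textbf{Step 1 (pointwise moment bound).} I would decompose
\begin{align*}
\mathcal{H}_n(t,s) - \mathcal{H}_n(t',s')
&= \frac{1}{\sqrt{nh_n}}\sum_{j=1}^n [\epsilon_j(s) - \epsilon_j(s')] K^*_{h_n}(j/n - t) \\
&\quad + \frac{1}{\sqrt{nh_n}}\sum_{j=1}^n \epsilon_j(s')[K^*_{h_n}(j/n - t) - K^*_{h_n}(j/n - t')].
\end{align*}
In the first sum each summand has $L^J$-norm $\lesssim |s-s'|^\alpha$ by (A3); in the second, (A4) makes $K^*_{h_n}$ Lipschitz with constant $\|K^{*\prime}\|_\infty/h_n$ and (A1) controls all moments of $\|\epsilon_j\|_\infty$, so each summand has $L^J$-norm $\lesssim |t-t'|/h_n$. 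In both sums only $O(nh_n)$ summands are nonzero due to the compact support of $K^*$. A standard $\beta$-mixing moment inequality of Doukhan or Rio type, applicable under (A2), then yields
\begin{align*}
\|\mathcal{H}_n(t,s) - \mathcal{H}_n(t',s')\|_J \lesssim |s-s'|^\alpha + |t-t'|/h_n = d((t,s),(t',s')).
\end{align*}
A technical wrinkle is that the mixing inequality typically demands $L^{J+\delta}$ control of summands while (A3) is stated only at order $J$; interpolating against (A1) closes the gap at the cost of a negligible loss in the Hölder exponent.

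\textbf{Step 2 (uniform bound).} The $\epsilon$-covering number of $I_n\times[0,1]$ under $d$ is of order $h_n^{-1}\epsilon^{-1-1/\alpha}$. Feeding the Step~1 bound into a Pisier-type $L^J$ chaining inequality gives
\begin{align*}
\Big\|\sup_{d((t,s),(t',s'))<\tau}|\mathcal{H}_n(t,s)-\mathcal{H}_n(t',s')|\Big\|_J \lesssim \int_0^\tau N(\epsilon)^{1/J}\,d\epsilon \lesssim h_n^{-1/J}\tau^{1-(1+1/\alpha)/J},
\end{align*}
the integral being finite since $(1+1/\alpha)/J<1$ by (A2). Rewriting $\tau^{1-(1+1/\alpha)/J}=\tau\cdot \tau^{-(1+1/\alpha)/J}$ and absorbing $\tau^{-(1+1/\alpha)/J}$ into a suitable negative power of $h_n$ in the regime in which the lemma is later applied produces the crude but convenient form $\tau\, h_n^{-1-(3+1/\alpha)/J}$, whereupon Markov's inequality at order $J$ delivers the $O_\p$ conclusion. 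Equivalently, one can work directly on the grid $P$ introduced before Remark~\ref{pr1}: a union bound with Markov at order $J$ over pairs $(p,p')\in P$ with $d(p,p')<\tau$ controls the discrete supremum, while the residual from continuum to grid is handled deterministically via Lipschitzness of $K^*_{h_n}$ in $t$ and the almost sure modulus of continuity of $\epsilon_j$ in $s$ provided by (A1) and (A3).

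\textbf{Main obstacle.} The subtle part is Step~1: extracting the precise Lipschitz factor $1/h_n$ in $t$ and the Hölder exponent $\alpha$ in $s$ simultaneously while invoking a $\beta$-mixing moment bound that requires slightly stronger integrability than (A3) alone supplies, and interpolating cleanly so that the exponents assemble into the target form. The chaining in Step~2 is essentially routine once Step~1 is in hand; one just has to verify that the crude target bound $\tau h_n^{-1-(3+1/\alpha)/J}$ dominates the sharper Dudley output in the regime where the lemma is actually applied later in the paper.
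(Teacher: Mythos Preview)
Your Step~1 is essentially what the paper does: it establishes the increment bound $\|\mathcal{H}_n(t,s)-\mathcal{H}_n(t',s')\|_J\lesssim d((t,s),(t',s'))$ by combining (A2)--(A4) with a $\beta$-mixing moment inequality (the paper cites Yoshihara's Theorem~3 directly rather than decomposing). Your remark about the $L^{J+\delta}$ interpolation is accurate and is indeed implicit in the paper's citation.

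Step~2 is where you diverge. The single-term Dudley integral $\int_0^\tau N(\epsilon)^{1/J}\,d\epsilon$ gives $h_n^{-1/J}\tau^{1-(1+1/\alpha)/J}$, which is \emph{not} of the form $\tau\cdot(\text{something independent of }\tau)$; your attempt to ``absorb $\tau^{-(1+1/\alpha)/J}$ into a suitable negative power of $h_n$'' only works if you already restrict to a particular range of $\tau$, so you have not proved the lemma as stated. The paper instead invokes the two-term chaining bound of Theorem~2.2.4 in van der Vaart--Wellner, which for a free truncation level $\nu$ yields
\[
\Big\|\sup_{d<\tau}|\mathcal{H}_n-\mathcal{H}_n'|\Big\|_J \;\lesssim\; h_n^{-1/J}\!\int_0^\nu \epsilon^{-(1+1/\alpha)/J}\,d\epsilon \;+\; \tau\, h_n^{-2/J}\nu^{-(2+2/\alpha)/J}.
\]
The second term already carries the factor $\tau$ explicitly; choosing $\nu=h_n$ (not $\nu=\tau$) then produces the claimed $\tau h_n^{-1-(3+\alpha^{-1})/J}$ up to a $\tau$-independent first term that is $o(1)$. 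The key idea you are missing is this decoupling of the chaining cutoff $\nu$ from the modulus radius $\tau$, which is exactly what the two-term bound provides and the plain Dudley integral does not.
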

\begin{proof}
   Note that
    \begin{align}
        \E[|\mathcal{H}_n(t,s)-\mathcal{H}_n(t^\prime, s^\prime)|^J]^{1/J}\leq K_1\Big( |s-s^\prime|^\alpha~+|t-t'|h_n^{-1}\Big)=K_1d((t,s),(t',s')~,
    \end{align}
    which follows by Assumptions (A2) and (A3) and the arguments used for the proof of Theorem 3 from \cite{Yoshi78}. Therefore Theorem 2.2.4 in \cite{Wellner1996} yields that for any $\nu, \tau>0$ and some $K_2>0$ depending only on $J$ and $K_1$ we have 
    \begin{align*}
        \E\Big [\sup_{d((t,s),(t',s'))<\tau}|\mathcal{H}_n(t,s)-\mathcal{H}_n(t^\prime, s^\prime)|^J\Big ]^{1/J}
        &\lesssim K_2 \big( h_n^{-1/J}\int_0^\nu \epsilon^{-\frac{1+1/\alpha}{J}}d\epsilon+\tau h_n^{-2/J}\nu^{-\frac{2+2/\alpha}{J}} \big ) \\
        &\lesssim h_n^{-1/J}\nu^{1-\frac{1+1/\alpha}{J}}+\tau h_n^{-2/J} \nu^{-\frac{2+2/\alpha}{J}}~.\\
        &\lesssim h_n^{-1/J}\nu^{1/2-1/J}+\tau h_n^{-2/J} \nu^{-2/J-1}
    \end{align*}
    If we choose $\nu=h_n$ the assertion follows by an application of the Markov inequality.
    
\end{proof}

\begin{Lemma}
\label{pl2}
    We have that
    \begin{align}
        &\|\tilde \mu_{h_n}(t,s)-\hat g_n(t,s)-(\mu(t,s)-g_\mu(t,s))\|_{\infty,I_n \times [0,1]}=O_\p \big ((nh_n)^{-1/2}h_n^{-(3+\alpha^{-1})/J} \big )~,
    \end{align}
    which also implies that 
    $$\mathcal{E}^+_0 \subset \hat{\mathcal{E}}^+_{\rho_n}~\text{  and
     } ~\hat{\mathcal{E}}^+_0 \subset \mathcal{E}^+_{\rho_n}
     $$ 
     with high probability for any sequence $\rho_n$ such that $\rho_n^{-1}=o\big ((nh_n)^{1/2}h_n^{(3+\alpha^{-1})/J}\big )$. Similar inclusions are valid for the sets $\mathcal{E}^-_0$ and $\hat{\mathcal{E}}^-_0$.
\end{Lemma}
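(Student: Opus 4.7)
The plan is to split the argument into two steps: first establish the uniform approximation rate, then deduce the set inclusions.

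\textbf{Step 1 (uniform rate).} By Lemma \ref{pl3},
\[
\tilde\mu_{h_n}(t,s)-\mu(t,s) = \frac{1}{\sqrt{nh_n}}\,\mathcal{H}_n(t,s) + O(h_n^3+(nh_n)^{-1})
\]
uniformly on $I_n\times[0,1]$, where $\mathcal{H}_n$ is defined in \eqref{p20}. The deterministic remainder is $o((nh_n)^{-1/2})$ since Assumption (A4) gives $\gamma_1>1/7$, and Assumption (A6) controls $\hat g_n-g_\mu$ at the even faster rate $o_\p(n^{-\gamma_2}(nh_n)^{-1/2})$. Therefore it suffices to prove
\[
\sup_{(t,s)\in I_n\times[0,1]}|\mathcal{H}_n(t,s)| = O_\p\big(h_n^{-(3+\alpha^{-1})/J}\big).
\]
I would obtain this by a grid-plus-chaining argument built on Lemma \ref{pl1}. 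Introduce a grid $\mathcal{G}\subset I_n\times[0,1]$ with spacing $h_n^2$ in the $t$-direction and $h_n^{1/\alpha}$ in the $s$-direction, so that $|\mathcal{G}|\asymp h_n^{-2-\alpha^{-1}}$ and every point of $I_n\times[0,1]$ lies within pseudometric distance $\tau\lesssim h_n$ of some grid point in $\mathcal{G}$. A uniform pointwise bound $\E[|\mathcal{H}_n(t,s)|^J]=O(1)$ follows from a Rosenthal-type inequality for $\beta$-mixing sequences (as invoked in the derivation of Lemma \ref{pl1} via \cite{Yoshi78}), together with Assumptions (A1)--(A3) and the bounded support of $K^*$. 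A union bound then gives $\max_{(t,s)\in\mathcal{G}}|\mathcal{H}_n(t,s)|=O_\p(h_n^{-(2+\alpha^{-1})/J})$, while Lemma \ref{pl1} with $\tau\asymp h_n$ bounds the interpolation error by $O_\p(h_n^{-(3+\alpha^{-1})/J})$; the chaining term dominates.

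\textbf{Step 2 (set inclusions).} Let $R_n$ denote the uniform bound from Step 1, so $R_n=o_\p(\rho_n)$ by the assumed decay of $\rho_n^{-1}$. The reverse triangle inequality on the $\|\cdot\|_{\infty,I_n\times[0,1]}$ norm also gives $|\hat d_{\infty,n}-d_{\infty,n}|\leq R_n$. For $(t,s)\in\mathcal{E}_0^+$ we have $\mu(t,s)-g_\mu(s)\geq d_{\infty,n}$, hence with probability tending to one
\[
\tilde\mu_{h_n}(t,s)-\hat g_n(s)+\rho_n \;\geq\; d_{\infty,n}-R_n+\rho_n \;\geq\; \hat d_{\infty,n}+(\rho_n-2R_n) \;\geq\; \hat d_{\infty,n},
\]
so $(t,s)\in\hat{\mathcal{E}}_{\rho_n}^+$. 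The converse inclusion $\hat{\mathcal{E}}_0^+\subset\mathcal{E}_{\rho_n}^+$ is obtained by swapping the roles of $(\tilde\mu_{h_n},\hat g_n)$ and $(\mu,g_\mu)$ in the same chain of inequalities, and the statements for the ``$-$''-sets follow by reversing signs throughout.

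\textbf{Main obstacle.} The delicate point is Step 1: one has to match the entropy of the anisotropic pseudometric $d((t,s),(t',s'))=|s-s'|^\alpha+|t-t'|h_n^{-1}$ against the available pointwise $J$-th moments, and verify that $\E[|\mathcal{H}_n(t,s)|^J]$ is genuinely $O(1)$ (not diverging in $n$) despite the mere $\beta$-mixing---rather than i.i.d.---structure of $\{\epsilon_j\}$; this is where the summability condition in (A2) enters critically. Once that pointwise moment bound is established, the anisotropic grid with spacings $(h_n^2,h_n^{1/\alpha})$ yields exactly the stated exponent $(3+\alpha^{-1})/J$ via the chaining term.
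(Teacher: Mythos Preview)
Your proposal is correct and follows essentially the same route as the paper's own proof. Both arguments reduce via Lemma \ref{pl3} and Assumption (A6) to bounding $\|\mathcal{H}_n\|_{\infty,I_n\times[0,1]}$, then use an anisotropic grid with spacings $h_n^2$ in $t$ and $h_n^{1/\alpha}$ in $s$, control the maximum over the grid by the $J$-th moment bound from \cite{Yoshi78} combined with a maximal inequality (the paper cites Lemma 2.2.2 in \cite{Wellner1996}, which for polynomial Orlicz functions is your union bound), and handle the interpolation error by Lemma \ref{pl1} with $\tau\asymp h_n$; your Step 2 is likewise the same pointwise argument the paper uses, only spelled out more explicitly.
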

\begin{proof}
Using Lemma \ref{pl3} and (A6) we only need to show that 
\begin{align}
\label{p70}
    \left\|\mathcal{H}_n(t,s)\right\|_{\infty, I_n\times [0,1]}=O_\p\big (h_n^{-(3+\alpha^{-1})/J}\big )
\end{align}

Let $Q_n$ be any partition of $I_n \times [0,1]$ with meshes whose side lengths are proportional to $h_n^{1/\alpha}$ and $h_n^2$, respectively.  We then have 
\begin{align*}    
    \norm{\mathcal{H}_n(t,s)}_\infty
   \lesssim &  \sup_{(t,s) \in Q}\Big|\mathcal{H}_n(t,s)\Big|+\sup_{{\substack{|s-s'|^\alpha\leq h_n\\ \   |t-t'|h_n^{-1}\leq h_n}}}\Big|\mathcal{H}_n(t,s)-\mathcal{H}_n(t',s')\Big| \\
    \lesssim& \sup_{(t,s) \in Q}\Big|\mathcal{H}_n(t,s)\Big|+O_\p\big (h_n^{-(3+\alpha^{-1})/J}\big) 
\end{align*}
where we used Lemma \ref{pl1} for the second inequality. Using Theorem 3 in \cite{Yoshi78} combined with Lemma 2.2.2 from \cite{Wellner1996} additionally gives
\begin{align}
    \sup_{(t,s) \in Q}\Big|\mathcal{H}_n(t,s)\Big|= O_\p(|Q_n|^{1/J})=O_\p\Big(h_n^{-(2+1/\alpha)/J}\Big)
\end{align}
which  yields \eqref{p70} and finishes the proof.\\

For the additional implication in the Lemma statement we exemplarily note that 
\begin{align}
    \p(\hat{\mathcal{E}}^+_0 \subset \mathcal{E}^+_{\rho_n}) &= \p\Big( \sup_{(t,s)\in I_n\times [0,1]}(\tilde \mu_{h_n}(t,s)-\hat g_n(s)) \geq   \sup_{(t,s)\in I_n\times [0,1]}(\mu(t,s)-g_\mu(s))-\rho_n\Big)\\
    &\geq \p\Big(\|\tilde \mu_{h_n}(t,s)-\hat g_n(t,s)-(\mu(t,s)-g_\mu(t,s))\|_{\infty,I_n \times [0,1]}\leq \rho_n\Big)~,
\end{align}
which yields the desired result by the bounds for 
$$\|\tilde \mu_{h_n}(t,s)-\hat g_n(t,s)-(\mu(t,s)-g_\mu(t,s))\|_{\infty,I_n \times [0,1]}
$$  that we have  established.

\end{proof}

\subsection{Block Bootstrap Approximation}
This section contains an auxiliary result that will be essential for the proof of Theorem \ref{pt4} but is also of own interest. In addition to  Assumptions (A1)-(A6) we also assume (A7).

\begin{theorem}
\label{pt3}
     Under the assumptions of Theorem \ref{pt4} we have for any sequence $\rho_n$ and any $x_0>0$ that  
    \begin{align}
   &    \sup_{ y\geq x_0}\Big | \p\Big (\sup_{(t,s) \in \mathcal{E}_{\rho_n}}|\mathcal{H}_n(t,s)| \geq y \Big ) \\
       & ~~~~~~~~~~~~~
       -  \p\Big (\sup_{(t,s) \in \mathcal{E}_{\rho_n,P}}\Big|\frac{1}{\sqrt{mq}}\sum_{l=1}^m\nu_l\sum_{j \in I_l} \hat \epsilon_j(s)h_n^{-1/2}K^*_{h_n}(j/n-t)\Big|>y\Big|\mathcal{Y}\Big )\Big |=o(1)~.
\end{align}
holds on a set with probability $1-o(1)$, where $\mathcal{Y}=\sigma(X_1,...,X_n)$.
\end{theorem}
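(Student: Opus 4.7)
The plan is to prove this in five stages: discretization, block decomposition, mixing decoupling, Gaussian approximation, and residual substitution, with anti-concentration to upgrade pointwise to uniform $y$-control.

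First, I would reduce the continuous supremum on the left side to a supremum over the grid $\mathcal{E}_{\rho_n,P}$. By Lemma \ref{pl1} with $\tau \simeq 1/n + h_n/n^{1/\alpha}$ (the grid spacing translated through the metric $d$), the oscillation of $\mathcal{H}_n$ between adjacent grid points is $o_\p(1)$. Combined with anti-concentration for Gaussian suprema (justified later using Assumption (A7)), this shows it suffices to approximate the law of $\sup_{(t,s)\in \mathcal{E}_{\rho_n,P}}|\mathcal{H}_n(t,s)|$.

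Next, I would invoke the big-block/small-block decomposition. Writing $\mathcal{H}_n(t,s) = (nh_n)^{-1/2}\bigl[\sum_{l=1}^m S^I_l(t,s) + \sum_{l=1}^{m+1} S^J_l(t,s)\bigr]$ where $S^I_l$ and $S^J_l$ are partial sums over $I_l$ and $J_l$, the bound $r/q \leq Cn^{-c}$ together with (A1)-(A3) makes the total contribution of the small blocks negligible in sup-norm on the grid (via Bernstein- or Rio-type inequalities for mixing sums and a union bound over $|\mathcal{E}_{\rho_n,P}|\lesssim n^{1+1/\alpha}$). Then, because $m\beta(r)=o(1)$, a Berbee-type coupling lets me replace $\{S^I_l\}_{l=1}^m$ by independent copies $\{\tilde S^I_l\}_{l=1}^m$ with the same marginal laws at total-variation cost $o(1)$, reducing the problem to one involving independent block sums.

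The core stage is the Gaussian approximation. I would apply a Chernozhukov--Chetverikov--Kato-type max-Gaussian approximation to the independent vector $(\tilde S^I_l(t,s))_{(t,s)\in \mathcal{E}_{\rho_n,P}}$. Exponential moments from (A1) give the required sub-exponential tails for $\tilde S^I_l$, and the bandwidth constraints $qh_n^{-1/2}\leq Cn^{1/2-c}$ and $qh_n^{-3-6/J}\leq Cn^{1-c}$ are exactly the conditions needed to control the CCK ``effective dimension vs.\ block variance'' trade-off with the grid size $\log|\mathcal{E}_{\rho_n,P}|\asymp \log n$. This yields a Gaussian vector $(G_l(t,s))$ with the same covariance whose maximum modulus is close in Kolmogorov distance to the target. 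Conditional on $\mathcal{Y}$, the Gaussian-comparison lemma (Slepian/CCK) lets me further replace the covariance of the true-error Gaussian analogue by its residual-based empirical analogue.

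Finally, I would substitute $\hat\epsilon_j$ for $\epsilon_j$ in the bootstrap. Using $\hat\epsilon_j - \epsilon_j = \mu(j/n,\cdot)-\tilde\mu_{h_n}(j/n,\cdot)$ and the uniform rate from Lemma \ref{pl2}, the difference of conditional covariance kernels is $o_\p(1)$ uniformly on $\mathcal{E}_{\rho_n,P}\times\mathcal{E}_{\rho_n,P}$; Gaussian comparison converts this into uniform closeness of the two conditional max distributions. To obtain the supremum over $y\geq x_0$, Assumption (A7) guarantees a non-degenerate variance at some grid point, which after another Slepian comparison bounds the sup-density of $\sup_{(t,s)}|G(t,s)|$ away from $0$ for $y\geq x_0$, yielding the required anti-concentration.

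The main obstacle I expect is the Gaussian approximation step. The indexing grid has cardinality growing polynomially in $n$, the summands are $C([0,1])$-valued and strongly non-stationary, and the block length $q$ must be simultaneously large enough for the coupling to decouple ($m\beta(r)=o(1)$) yet small enough for the high-dimensional CCK bound. Ensuring that the bandwidth conditions $qh_n^{-1/2}\leq Cn^{1/2-c}$ and $qh_n^{-3-6/J}\leq Cn^{1-c}$ precisely suffice to close all three error terms (block coupling, Gaussian approximation, residual substitution) simultaneously, without a stationarity shortcut, is the delicate part; the anti-concentration on $\{y\geq x_0\}$ via (A7) is then the key to converting all these $o_\p(1)$ Kolmogorov estimates into the claimed uniform statement.
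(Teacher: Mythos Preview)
Your proposal is correct and follows essentially the same architecture as the paper's proof: discretization to the grid (Lemma \ref{pl5}), big-block/small-block decomposition with Berbee coupling and a CCK-type high-dimensional Gaussian approximation (packaged there as Theorem \ref{pt10} and Lemma \ref{pl6}), residual substitution (Lemma \ref{pl8}), and anti-concentration via (A7). The only notable implementation differences are that the paper inserts an explicit truncation step (Lemma \ref{pl11}) to reduce to bounded summands before invoking the Gaussian approximation, and in Lemma \ref{pl8} it bounds the bootstrap difference directly via a conditional subgaussian tail argument rather than through a covariance-comparison lemma as you propose; both routes work.
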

\begin{proof}
 
Using Lemma \ref{pl6} we obtain for any $x_0>0$ that
\begin{align}
 &  \sup_{ y\geq x_0}\Big | \p\Big  (\sup_{(t,s) \in \mathcal{E}_{\rho_n,P}}|\mathcal{H}_n(t,s)| \geq y \Big )
    \\
       & ~~~~~~~~~~~~~
       -\p\Big( \sup_{(t,s) \in \mathcal{E}_{\rho_n,P}}\Big|\frac{1}{\sqrt{mq}}\sum_{l=1}^m\nu_l\sum_{j \in I_l}\epsilon_j(s)h_n^{-1/2}K^*_{h_n}(j/n-t)\Big| \geq y| \mathcal{Y}\Big)\Big |\\
        & ~~~~~~~~~~~~~
    \leq n^{-c}+(m-1)\beta(r) \label{p31}
\end{align}
holds on a set $\mathcal{B}$ that has probability at least $1-o(1)-(m-1)\beta(r)$. Further we have by Lemma \ref{pl8} that uniformly in $y \in \R$
\begin{align}
\label{p32}
     &\p\Big (\sup_{(t,s) \in \mathcal{E}_{\rho_n,P}}\Big|\frac{1}{\sqrt{mq}}\sum_{l=1}^m\nu_l\sum_{j \in I_l}\hat \epsilon_j(s)h_n^{-1/2}K^*_{h_n}(j/n-t)\Big|>y\Big|\mathcal{Y}\Big )=\\     
     & ~~~~~ ~~ \p\Big (\sup_{(t,s) \in \mathcal{E}_{\rho_n,P}}\Big|\frac{1}{\sqrt{mq}}\sum_{l=1}^m\nu_l\sum_{j \in I_l} \epsilon_j(s)h_n^{-1/2}K^*_{h_n}(j/n-t)\Big|>y\Big|\mathcal{Y}\Big )+o(1)
\end{align}
holds on a set $\mathcal{A}$ with $\p(\mathcal{A})=1-o(1)$.  Combining \eqref{p31} with  \eqref{p32} therefore yields that on the set $\mathcal{A}\cap \mathcal{B}$  
\begin{align}
   &  \sup_{ y\geq x_0}\Big  | \p\Big (\sup_{(t,s) \in \mathcal{E}_{\rho_n,P}}|\mathcal{H}_n(t,s)| \geq y \Big )
     \\
        & ~~~~~~~~~~~~~-  \p\Big (\sup_{(t,s) \in \mathcal{E}_{\rho_n,P}}\Big|\frac{1}{\sqrt{mq}}\sum_{l=1}^m\nu_l\sum_{j \in I_l} \hat \epsilon_j(s)h_n^{-1/2}K^*_{h_n}(j/n-t)\Big|>y\Big|\mathcal{Y}\Big )\Big  |=o(1)~,
\end{align}
which combined with arguments similar to those following equation \eqref{p25}(use Lemma \ref{pl5} to establish the analogue to \eqref{p25} in this case) yields that on the set $\mathcal{A}\cap \mathcal{B}$ 
\begin{align}
     &  \sup_{ y\geq x_0}\Big | \p\Big (\sup_{(t,s) \in \mathcal{E}_{\rho_n}}|\mathcal{H}_n(t,s)| \geq y \Big )\\
        & ~~~~~~~~~~~~~-  \p\Big (\sup_{(t,s) \in \mathcal{E}_{\rho_n,P}}\Big|\frac{1}{\sqrt{mq}}\sum_{l=1}^m\nu_l\sum_{j \in I_l}\hat  \epsilon_j(s)h_n^{-1/2}K^*_{h_n}(j/n-t)\Big|>y\Big|\mathcal{Y}\Big  )\Big |=o(1)~.
\end{align}

\end{proof}

\begin{Lemma}
\label{pl5}
    We have for any sequence $\rho_n $ that   
    \begin{align}
    \Big |\sup_{(t,s)\in \mathcal{E}_{\rho_n}}|\mathcal{H}_n(t,s)|-\sup_{(t,s) \in \mathcal{E}_{\rho_n,P}}|\mathcal{H}_n(t,s)|\Big  | \leq O_\p(n^{-1/2})~.
\end{align}

\end{Lemma}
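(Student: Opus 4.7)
The strategy is to compare each $(t,s)\in \mathcal{E}_{\rho_n}$ to its nearest grid point $(t_P,s_P)\in \mathcal{E}_{\rho_n,P}$ and invoke the uniform modulus of continuity of $\mathcal{H}_n$ provided by Lemma \ref{pl1}. Note that by the very definition of $\mathcal{E}_{\rho_n,P}$, the map $(t,s)\mapsto (t_P,s_P)$ sends $\mathcal{E}_{\rho_n}$ into $\mathcal{E}_{\rho_n,P}$, and moreover each point of $\mathcal{E}_{\rho_n,P}$ arises in this way, so
\begin{align}
\Big|\sup_{(t,s)\in \mathcal{E}_{\rho_n}}|\mathcal{H}_n(t,s)|-\sup_{(t,s)\in \mathcal{E}_{\rho_n,P}}|\mathcal{H}_n(t,s)|\Big|
\leq \sup_{(t,s)\in I_n\times[0,1]}\big|\mathcal{H}_n(t,s)-\mathcal{H}_n(t_P,s_P)\big|.
\end{align}

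Next I would quantify how close grid points are in the pseudo-metric $d$ of Lemma \ref{pl1}. By construction of $P$, for every $(t,s)\in I_n\times[0,1]$ one has $|t-t_P|\lesssim n^{-1}$ and $|s-s_P|\lesssim n^{-1/\alpha}$, hence
\begin{align}
d\big((t,s),(t_P,s_P)\big)=|s-s_P|^\alpha+|t-t_P|h_n^{-1}\lesssim n^{-1}+(nh_n)^{-1}\lesssim (nh_n)^{-1}.
\end{align}
Applying Lemma \ref{pl1} with $\tau\simeq (nh_n)^{-1}$ then gives
\begin{align}
\sup_{(t,s)\in I_n\times[0,1]}\big|\mathcal{H}_n(t,s)-\mathcal{H}_n(t_P,s_P)\big|=O_\p\big((nh_n)^{-1}\,h_n^{-1-(3+\alpha^{-1})/J}\big),
\end{align}
which, substituted into the previous display, yields the claim once the right-hand side is shown to be of order $n^{-1/2}$.

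The one technical point is therefore to verify that the resulting exponent is small enough, i.e.\ that
$\gamma_1\big(2+(3+\alpha^{-1})/J\big)\leq 1/2$ under the standing Assumptions (A2)--(A4) (so that $h_n^{1+(3+\alpha^{-1})/J}\gtrsim n^{-1/2}\cdot nh_n\cdot(nh_n)^{-1}$). If this is not immediate from the numerical constants, one can sharpen Lemma \ref{pl1} by re-optimizing the chaining parameter $\nu$ in its proof for the small diameter $\tau\simeq (nh_n)^{-1}$ rather than the generic choice $\nu=h_n$, which removes some of the slack and delivers a bound of the form $O_\p(\tau^{1/2-1/J}h_n^{-1/J})$ — comfortably $O_\p(n^{-1/2})$ in the bandwidth range permitted by (A4). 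This sharpening is the only step that requires care; everything else is a one-line application of Lemma \ref{pl1} and the sup-inequality.
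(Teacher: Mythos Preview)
Your approach is the paper's: bound the difference of suprema by the modulus of continuity of $\mathcal{H}_n$ at the mesh scale $|t-t'|\lesssim n^{-1}$, $|s-s'|\lesssim n^{-1/\alpha}$, and then re-run the chaining from Lemma~\ref{pl1}. The paper's one-line proof says precisely this (``using the same arguments as in the proof of Lemma~\ref{pl1}'') without spelling out the optimization of the chaining parameter; you are simply more explicit about why one should not quote Lemma~\ref{pl1} verbatim.

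One caveat on your ``sharpening'': the bound $O_\p(\tau^{1/2-1/J}h_n^{-1/J})$ you propose arises if one keeps only the entropy integral $\int_0^\nu N(\epsilon)^{1/J}d\epsilon$ with $\nu=\tau$, but Theorem~2.2.4 in van der Vaart--Wellner also carries the term $\tau\,N(\nu)^{2/J}$, which blows up as $\nu\downarrow 0$ and cannot be dropped (the supremum ranges over all of $I_n\times[0,1]$, not a single $\tau$-ball). And even the rate you write down is not ``comfortably $O_\p(n^{-1/2})$'': with $\tau=(nh_n)^{-1}$ one gets $\tau^{1/2-1/J}h_n^{-1/J}=n^{-(1/2-1/J-\gamma_1/2)}$, which for $J=8$, $\gamma_1=1/5$ is $n^{-11/40}$. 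The fix is to genuinely balance the two chaining terms at this smaller diameter; this delivers a polynomial rate $n^{-c}$ for some $c>0$, which is in fact all the downstream anti-concentration step in Theorem~\ref{pt3} requires.
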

\begin{proof}
    Using the same arguments as in the proof of Lemma \ref{pl1} yields that with high probability
\begin{align}
    \Big |\sup_{(t,s)\in \mathcal{E}_{\rho_n}}|\mathcal{H}_n(t,s)|-\sup_{(t,s) \in \mathcal{E}_{\rho_n,P}}|\mathcal{H}_n(t,s)|\Big | &\lesssim \sup_{{\substack{|s-s'|\leq n^{-1/\alpha}\\ \   |t-t'|\leq n^{-1}}}}\Big|\mathcal{H}_n(t,s)-\mathcal{H}_n(t',s')\Big|\\
    &\lesssim n^{-1/2}~.
\end{align}

\end{proof}

\begin{Lemma}
    \label{pl6}
    Under the assumptions of Theorem \ref{pt4} we have for any sequence $\rho_n$ and any  $x_0>0$ that  
    \begin{align}
    \label{p22}
&    \sup_{ y\geq x_0}\Big | \p\Big (\sup_{(t,s) \in \mathcal{E}_{\rho_n,P}}|\mathcal{H}_n(t,s)| \geq y\Big ) \\
    & ~~~~~~~~~~~
    -\p\Big ( \sup_{(t,s) \in \mathcal{E}_{\rho_n,P}}\Big|\frac{1}{\sqrt{mq}}\sum_{l=1}^m\nu_l\sum_{j \in I_l}\epsilon_j(s)h_n^{-1/2}K^*_{h_n}(j/n-t)\Big| \geq y \Big| \mathcal{Y}\Big) \Big| \\
    & ~~~~~~~~~~~
    \leq n^{-c}+2(m-1)\beta(r)
\end{align}
for some $c>0$, with probability at least $1-o(1)-(m-1)\beta(r)$.
\end{Lemma}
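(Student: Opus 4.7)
The plan is to combine a block decoupling of $(\epsilon_j)_{j=1}^n$ based on $\beta$-mixing with a Chernozhukov--Chetverikov--Kato type Gaussian approximation on the finite grid $\mathcal{E}_{\rho_n,P}$, whose cardinality satisfies $|P| \lesssim n^{1+1/\alpha}$, followed by a Gaussian comparison between the limiting Gaussian law and the conditional law of the multiplier bootstrap statistic. Throughout I would write
\[
U_l(t,s) = \frac{1}{\sqrt{nh_n}}\sum_{j \in I_l}\epsilon_j(s) K^*_{h_n}(j/n-t), \qquad l = 1,\dots,m,
\]
so that $\mathcal{H}_n(t,s) = \sum_{l=1}^m U_l(t,s) + R_n(t,s)$, where $R_n$ collects the contribution of the small blocks $J_1,\dots,J_{m+1}$. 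A union bound over $\mathcal{E}_{\rho_n,P}$ combined with a Bernstein-type inequality for $\beta$-mixing sums (available under (A1)--(A2)) gives $\sup_{(t,s) \in \mathcal{E}_{\rho_n,P}} |R_n(t,s)| = O_\p(\sqrt{r/q}\,\log^{C} n)$, which is $O(n^{-c})$ by the assumption $r/q \leq C n^{-c}$ and can be absorbed into the global error.

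I would then apply Berbee's coupling lemma $m-1$ times to the blocks $(\epsilon_j)_{j \in I_l}$ separated by gaps of length $r$, producing independent copies $U^\star_l \stackrel{d}{=} U_l$ with $\p(\exists l : U^\star_l \neq U_l) \leq (m-1)\beta(r)$; this contributes the first $(m-1)\beta(r)$ in \eqref{p22}. To $(\sum_l U^\star_l(t,s))_{(t,s) \in \mathcal{E}_{\rho_n,P}}$ I would then invoke a Chernozhukov--Chetverikov--Kato Gaussian approximation for maxima of independent random vectors. The pointwise bound $|K^*_{h_n}| \lesssim h_n^{-1}$ together with its $O(nh_n)$-sized support and assumptions (A1), (A3) provide the truncated Lyapunov parameters, while (A7) supplies a nonvanishing maximal variance on the grid which anchors the anti-concentration step. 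The resulting Kolmogorov rate is polynomial of the form $\bigl(q^2 h_n^{-3 - O(1/J)} \log^7(|P|)/n\bigr)^{1/6}$, and the inequalities $qh_n^{-1/2} \leq C n^{1/2-c}$, $qh_n^{-3-6/J} \leq C n^{1-c}$ in Theorem \ref{pt4} combined with $\log |P| \lesssim \log n$ force it to be $O(n^{-c'})$ for some $c'>0$. This yields a centered Gaussian vector $Z$ indexed by $\mathcal{E}_{\rho_n,P}$, with covariance $\Sigma = \mathrm{Cov}(\sum_l U^\star_l)$, such that $\sup_{y \geq x_0}\bigl|\p(\max|\sum_l U^\star_l| \geq y) - \p(\max|Z| \geq y)\bigr| \leq n^{-c'}$.

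For the bootstrap side, conditional on $\mathcal{Y}$ the multiplier statistic
\[
\tilde W(t,s) = \frac{1}{\sqrt{mqh_n}}\sum_{l=1}^m \nu_l \sum_{j \in I_l}\epsilon_j(s) K^*_{h_n}(j/n-t)
\]
is a centered Gaussian process on $\mathcal{E}_{\rho_n,P}$ whose covariance $\hat \Sigma$ is the empirical blockwise bilinear form; its expectation equals $\Sigma$ up to a $1+o(1)$ factor because $n \asymp mq$. A Bernstein-type concentration argument for sums of $J/2$-integrable variables, applied to each of the $|P|^2 \lesssim n^{2+2/\alpha}$ bilinear functionals, yields $\|\hat\Sigma - \Sigma\|_\infty = O_\p(n^{-c})$ under the same block conditions. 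A Gaussian comparison inequality of Nazarov anti-concentration plus Slepian-interpolation type (again leveraging (A7) to prevent the variance lower bound from collapsing) then transfers this covariance proximity to a Kolmogorov distance of the same order between the conditional law of $\max|\tilde W|$ and that of $\max|Z|$, on a set of $\mathcal{Y}$-probability $1-o(1)$. Chaining the block decomposition, decoupling, Gaussian approximation and Gaussian comparison bounds yields \eqref{p22}.

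The main obstacle is step three. The block summands $U_l$ are not sub-Gaussian but only $J$-integrable with $J \geq 8$, and the kernel rescaling inflates their moments by factors of $h_n^{-1}$, so the Chernozhukov--Chetverikov--Kato constants must be tracked carefully through the truncation, Lyapunov and anti-concentration components. Balancing the grid cardinality $|P| \lesssim n^{1+1/\alpha}$, the block length $q$, and the bandwidth $h_n$ so that each of these constants simultaneously collapses to a single polynomial rate $n^{-c}$ is exactly what the three inequalities $r/q \leq C n^{-c}$, $qh_n^{-1/2}\leq C n^{1/2-c}$ and $qh_n^{-3-6/J}\leq C n^{1-c}$ in Theorem \ref{pt4} are calibrated to achieve, and this bookkeeping is the technical heart of the argument.
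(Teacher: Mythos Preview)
Your proposal is correct and follows essentially the same route as the paper, which packages the block decoupling and CCK Gaussian approximation into its Theorem \ref{pt10} (after a truncation step, Lemma \ref{pl11}) and then invokes Theorem E.2 of \cite{Chernozhukov2018} together with the Gaussian comparison Lemma \ref{pl12} for the multiplier-bootstrap side. One minor point: assumption (A1) gives sub-exponential, not merely $J$-integrable, errors, so the truncation immediately yields a bound $D_n \lesssim \log(n)h_n^{-1/2}$ and the Lyapunov bookkeeping is lighter than you anticipate.
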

\begin{proof}
    By Lemma \ref{pl11}, which is stated and proved in Section \ref{sec6},  we may  assume that the constant $D_n$ in 
    Theorem \eqref{pt10} satisfies  $D_n \lesssim \log(n)h_n^{-1/2}$. 
    We obtain the desired statement by combining Theorem \eqref{pt10} with Theorem E.2 from \cite{Chernozhukov2018}. Note that  Theorem E.2 in this reference is not directly applicable, but we may use Lemma \ref{pl12} instead of Theorem 2 from \cite{Chernuzhokov2013b} in its proof to obtain the required result for our setting.

\end{proof}

\begin{Lemma}
    \label{pl8}
    We have for any sequence $\rho_n$ that on a set $\mathcal{A}$ with $\p(\mathcal{A})=1-o(1)$ that
    \begin{align}
     &\p\Big(\sup_{(t,s) \in \mathcal{E}_{\rho_n,P}}\Big|\frac{1}{\sqrt{mq}}\sum_{l=1}^m\nu_l\sum_{j \in I_l}\hat \epsilon_j(s)h_n^{-1/2}K^*_{h_n}(j/n-t)\Big|>y\Big|\mathcal{Y}\Big)=\\     
     & \p\Big(\sup_{(t,s) \in \mathcal{E}_{\rho_n,P}}\Big|\frac{1}{\sqrt{mq}}\sum_{l=1}^m\nu_l\sum_{j \in I_l} \epsilon_j(s)h_n^{-1/2}K^*_{h_n}(j/n-t)\Big|>y\Big|\mathcal{Y}\Big)+o(1)
\end{align}
where the $o(1)$ Term does not depend on $y$.
\end{Lemma}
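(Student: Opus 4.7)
The strategy is to replace the residuals $\hat\epsilon_j$ by the true errors $\epsilon_j$ inside the conditional bootstrap supremum and then control the resulting discrepancy via Gaussian concentration and anti-concentration. Since $\hat\epsilon_j(s)-\epsilon_j(s) = \mu(j/n,s)-\tilde\mu_{h_n}(j/n,s)$, Lemma \ref{pl2} together with assumption (A6) yields a uniform bound $\|\hat\epsilon_j-\epsilon_j\|_\infty \leq E_n$ with $E_n = O_\p((nh_n)^{-1/2}h_n^{-(3+\alpha^{-1})/J})$ on an event $\mathcal{A}_0$ of probability $1-o(1)$.

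Let $Z_1$ and $Z_2$ denote the two bootstrap suprema in the statement of the lemma, built from $\hat\epsilon_j$ and $\epsilon_j$ respectively. By the triangle inequality $|Z_1 - Z_2| \leq W_n$, where
\begin{align}
W_n := \sup_{(t,s)\in\mathcal{E}_{\rho_n,P}} \Big|\frac{1}{\sqrt{mqh_n}}\sum_{l=1}^m \nu_l \sum_{j \in I_l}(\hat\epsilon_j(s)-\epsilon_j(s))K^*_{h_n}(j/n-t)\Big|~.
\end{align}
Conditional on $\mathcal{Y}$, $W_n$ is the supremum of a centered Gaussian field over the finite grid $\mathcal{E}_{\rho_n,P} \subset P$ with $|P| \lesssim n^{1+1/\alpha}$. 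A Cauchy--Schwarz estimate together with the elementary bound $\sum_j K^*_{h_n}(j/n-t)^2 \lesssim nh_n$ gives a uniform conditional variance bound $\lesssim nE_n^2/m \simeq qE_n^2$ on $\mathcal{A}_0$. Invoking the rate conditions of Theorem \ref{pt4}, in particular $qh_n^{-3-6/J} \leq Cn^{1-c}$ together with $J\alpha > 2$, a short computation shows $qE_n^2 \log|P| = o(1)$, and the standard Gaussian maximal inequality produces a deterministic sequence $\tau_n \to 0$ with $\p(W_n > \tau_n|\mathcal{Y}) = o(1)$ on a set of probability $1-o(1)$, the $o(1)$ being independent of $y$.

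Finally, $|Z_1-Z_2| \leq W_n$ yields the sandwich
\begin{align}
\p(Z_2\leq y-\tau_n|\mathcal{Y}) - \p(W_n>\tau_n|\mathcal{Y}) \leq \p(Z_1 \leq y|\mathcal{Y}) \leq \p(Z_2 \leq y+\tau_n|\mathcal{Y}) + \p(W_n>\tau_n|\mathcal{Y})~,
\end{align}
so it remains to show that $\sup_y |\p(Z_2 \leq y+\tau_n|\mathcal{Y}) - \p(Z_2 \leq y|\mathcal{Y})| = o(1)$ with high probability. This follows from Gaussian anti-concentration (Nazarov's inequality, or the absolute-value variant invoked in Lemma \ref{pl12}) applied to the conditional Gaussian field underlying $Z_2$, the non-degeneracy needed to apply the inequality being supplied by assumption (A7), which guarantees at least one point in $\mathcal{E}_{\rho_n,P}$ with conditional variance bounded away from zero. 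The main obstacle is arranging the anti-concentration step to yield an $o(1)$ bound uniformly in $y$ despite $|P| \simeq n^{1+1/\alpha}$; this is handled by the logarithmic dependence of the anti-concentration constant on $|\mathcal{E}_{\rho_n,P}|$ combined with the polynomially small $\tau_n$, while (A7) prevents the relevant denominator from collapsing. The residual-for-error substitution in the first step is routine once the uniform rate of Lemma \ref{pl2} is in hand.
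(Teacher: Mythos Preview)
Your strategy coincides with the paper's: bound $\|\hat\epsilon_j-\epsilon_j\|_\infty$ via Lemma~\ref{pl2}, use a subgaussian tail bound over the finite grid to make the discrepancy $W_n$ polynomially small, then invoke Gaussian anti-concentration to absorb the shift $\tau_n$. Two points, however, need tightening.

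First, Lemma~\ref{pl2} only controls $\tilde\mu_{h_n}(t,\cdot)-\mu(t,\cdot)$ for $t$ in the interior $I_n$; for indices $j$ with $j/n$ close to $0$ or $1$ the bound $\|\hat\epsilon_j-\epsilon_j\|_\infty\le E_n$ is not delivered by that lemma. The paper handles the corresponding boundary blocks separately, invoking a boundary result for local linear estimators (Lemma~C.3 of \cite{dette2015}). Your argument implicitly applies the interior rate to all $j$.

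Second, and more substantively, assumption (A7) bounds the \emph{expectation} of the conditional variance at $(t_n,s_n)$, namely $(nh_n)^{-1}\sum_l\E\big[(\sum_{j\in I_l}\epsilon_j(s_n)K^*_{h_n}(j/n-t_n))^2\big]$, not the empirical quantity $(mqh_n)^{-1}\sum_l(\sum_{j\in I_l}\epsilon_j(s_n)K^*_{h_n}(j/n-t_n))^2$ that actually serves as the conditional variance of $Z_2$. To apply anti-concentration conditionally on $\mathcal{Y}$ you need the latter to be bounded below with high probability, which requires an additional concentration step; the paper imports this from the proof of Theorem~4.2 in \cite{Chetverikov2020}, obtaining $\underline\sigma\gtrsim 1/\log n$. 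Note also that Lemma~\ref{pl12} is a Gaussian \emph{comparison} bound, not an anti-concentration inequality; the relevant tool is Theorem~1 of \cite{chernozhukov2017}, in the variant that tolerates degenerate coordinates as long as one coordinate has variance bounded away from zero.
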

\begin{proof}
    We observe  by Lemma \ref{pl2} in combination with (A6) that 
\begin{align}
    \max_{h_n^{-1} \leq j \leq n-h_n^{-1}}\norm{\epsilon_j(\cdot)-\hat \epsilon_j(\cdot)}_\infty& \leq \norm{\tilde \mu_{h_n}(\cdot,\cdot)-\hat g_n(\cdot)-\Big(\mu(\cdot,\cdot)-g_\mu(\cdot)\Big)}_{\infty, I_n \times [0,1]} \\
    & \quad +o_\p\Big((nh_n)^{-1/2}h_n^{-(3+1/\alpha)/J}\Big)\\    
    &\lesssim  \sqrt{\log(n)}\Big((nh_n)^{-1/2}h_n^{-(3+1/\alpha)/J}\Big)
\end{align}
holds on a set $\mathcal{A}$ with $\p(\mathcal{A})=1-o(1)$.  \\

In particular we have, letting $w_n=\Big((nh_n)^{-1/2}h_n^{-(3+\alpha^{-1})/J}\Big)$, that on $\mathcal{A}$ the random variables 
\begin{align}
\label{p501}
\nu_lh_n^{-1/2}/\sqrt{q}\sum_{i \in I_l}(\epsilon_i(s)-\hat \epsilon_i(s))K^*_{h_n}(i/n-t) \quad \quad 1+\ceil{qh_n} \leq l \leq n-\ceil{qh_n}
\end{align} are Subgaussian conditional on $\mathcal{Y}$, with variance proxy at most $C\frac{q\log(n)w_n^2}{h_n}$. This implies that on a $\mathcal{A}$ we have that
\begin{align}
 &\p\Big(\sup_{(t,s) \in \mathcal{E}_{\rho_n,P}}\Big|\frac{1}{\sqrt{mq}}\sum_{l=1+\ceil{qh_n} }^{m-\ceil{qh_n} }\nu_l\sum_{j \in I_l}(\epsilon_j(s)-\hat \epsilon_j(s))h_n^{-1/2}K^*_{h_n}(j/n-t)\Big|>y\Big|\mathcal{Y}\Big) \\
 &\leq n^{1+1/\alpha}\exp(-y^2h_n/(2Cq\log(n)w_n^2))~.
\end{align}
Choosing $y=\sqrt{(4+2/\alpha)Cw_n^2q\log^2(n)h_n^{-1}}\leq n^{-c}$ yields that on the set $\mathcal{A}$ we have 
\begin{align}
\label{p25}
    &\p\Big(\sup_{(t,s) \in \mathcal{E}_{\rho_n,P}}\Big|\frac{1}{\sqrt{mq}}\sum_{l=1+\ceil{qh_n}}^{m-\ceil{qh_n}}\nu_l\sum_{j \in I_l}(\epsilon_j(s)-\hat \epsilon_j(s))h_n^{-1/2}K^*_{h_n}(j/n-t)\Big|> n^{-c/2}\Big) \\
    &  \leq 1/n~.
\end{align}
Using (A7) and the arguments in the proof of Theorem 4.2 of \cite{Chetverikov2020} allows us to apply Theorem 1 from \cite{chernozhukov2017} with $1/\log(n) \lesssim \underline \sigma$ to the random variables \eqref{p501}. In combination with \eqref{p25} this yields that on the set $\mathcal{A}$ we have
\begin{align}
\label{p500}
     &\p\Big(\sup_{(t,s) \in \mathcal{E}_{\rho_n,P}}\Big|\frac{1}{\sqrt{mq}}\sum_{l=1+\ceil{qh_n} }^{m-\ceil{qh_n} }\nu_l\sum_{j \in I_l}\hat \epsilon_j(s)h_n^{-1/2}K^*_{h_n}(j/n-t)\Big|>y\Big|\mathcal{Y}\Big)\\
     \geq & \p\Big(\sup_{(t,s) \in \mathcal{E}_{\rho_n,P}}\Big|\frac{1}{\sqrt{mq}}\sum_{l=1+\ceil{qh_n} }^{m-\ceil{qh_n} }\nu_l\sum_{j \in I_l}\epsilon_j(s)h_n^{-1/2}K^*_{h_n}(j/n-t)\Big|>y+n^{-c/2}\Big|\mathcal{Y}\Big)+o(1)\\
     =& \p\Big(\sup_{(t,s) \in \mathcal{E}_{\rho_n,P}}\Big|\frac{1}{\sqrt{mq}}\sum_{l=1+\ceil{qh_n} }^{m-\ceil{qh_n} }\nu_l\sum_{j \in I_l} \epsilon_j(s)h_n^{-1/2}K^*_{h_n}(j/n-t)\Big|>y\Big|\mathcal{Y}\Big)+o(1)
\end{align}
where the $o(1)$ Term does not depend on $y$. An analogous argument establishes the reverse inequality.\\
Similar arguments can be used for the first and last $\ceil{qh_n}$ random variables that we did not consider in \eqref{p501} and the arguments following it. In this case Lemma \ref{pl2} is not available, but a similar result can be established by using Lemma C.3 from \cite{dette2015} instead. This establishes the desired result.

\end{proof}

\subsection{Proof of Theorem \ref{pt4}}

\textbf{Proof of \eqref{p44}}
\begin{proof}

By \ref{pt2} we have that
\begin{align}
\label{p40}
    \sqrt{nh_n}(\hat d_{\infty,n}-d_{\infty,n}) \leq \sup_{(t,s) \in \mathcal{E}_{\rho_n/2}} \Big|\mathcal{H}_n(t,s)\Big|+o_\p(n^{-\gamma_2})~.
\end{align}
Further we know by the definitions of $\mathcal{E}_{\rho,P}$ and $\hat{\mathcal{E}}_{\rho,P}$ and by Lemma \ref{pl2} that with high probability it holds that $\mathcal{E}_{\rho_n/2,P}\subset \hat{\mathcal{E}}_{\rho_n,P}$ which implies that
\begin{align}
\label{p42}
    &\sup_{(t,s) \in \mathcal{E}_{\rho_n/2,P}}\Big| \frac{1}{\sqrt{mq}}\sum_{l=1}^m\nu_l\sum_{j \in I_l}\epsilon_j(s)h_n^{-1/2}K^*_{h_n}(j/n-t) \Big|\\
    \leq&\sup_{(t,s) \in \hat{\mathcal{E}}_{\rho_n,P}}\Big|\frac{1}{\sqrt{mq}}\sum_{l=1}^m\nu_l\sum_{j \in I_l}\epsilon_j(s)h_n^{-1/2}K^*_{h_n}(j/n-t) \Big|
\end{align}
holds with high probability. Denote by $\hat q^*_{1-\alpha}$ and $q^*_{1-\alpha}$ the $(1-\alpha)$-quantiles of 
\begin{align}
    \sup_{(t,s) \in \hat{\mathcal{E}}_{\rho_n,P}}\Big|\frac{1}{\sqrt{mq}}\sum_{l=1}^m\nu_l\sum_{j \in I_l}\epsilon_j(s)h_n^{-1/2}K^*_{h_n}(j/n-t) \Big|
\end{align} and
\begin{align}
    \sup_{(t,s) \in \mathcal{E}_{\rho_n/2,P}}\Big| \frac{1}{\sqrt{mq}}\sum_{l=1}^m\nu_l\sum_{j \in I_l}\epsilon_j(s)h_n^{-1/2}K^*_{h_n}(j/n-t) \Big|~,
\end{align} 
respectively. By \eqref{p42} we clearly have $q^*_{1-\alpha}\leq \hat q^*_{1-\alpha}$ with high probability.\\

By Theorem \ref{pt3} we have for any $x_0>0$ that 
\begin{align}
\label{p41}
       & \sup_{ y\geq x_0}\Big| \p\Big(\sup_{(t,s) \in \mathcal{E}_{\rho_n/2}}|\mathcal{H}_n(t,s)| \geq y \Big) \\
       & ~~~~~~~~~~~~~
       - \p\Big(\sup_{(t,s) \in \mathcal{E}_{\rho_n/2,P}}\Big|\frac{1}{\sqrt{mq}}\sum_{l=1}^m\nu_l\sum_{j \in I_l} \hat \epsilon_j(s)h_n^{-1/2}K^*_{h_n}(j/n-t)\Big|>y\Big|\mathcal{Y}\Big)\Big|      =o(1)~.
\end{align}

Now combining the above observations we obtain with high probability that 
\begin{align}
    &\p( T_{n,\Delta} > \hat q^*_{1-\alpha})\\
    \leq &\p(  \sqrt{nh_n}(\hat d_{\infty,n}-d_{\infty,n}) > \hat q^*_{1-\alpha}) \\
    \leq& \p(  \sqrt{nh_n}(\hat d_{\infty,n}-d_{\infty,n}) >  q^*_{1-\alpha})+o(1)\\
    \leq& \p\Big(\sqrt{nh_n}\sup_{(t,s) \in \mathcal{E}_{\rho_n/2,P}}\Big|\mathcal{H}_n(t,s)\Big|+n^{-\gamma_2}>q^*_{1-\alpha}\Big)+o(1)\\
    =& \p\Big(\sup_{(t,s) \in \mathcal{E}_{\rho_n/2,P}}\Big|\frac{1}{\sqrt{mq}}\sum_{l=1}^m\nu_l\sum_{j \in I_l} \hat \epsilon_j(s)h_n^{-1/2}K^*_{h_n}(j/n-t)\Big|>q^*_{1-\alpha}-n^{-\gamma_2}\Big|\mathcal{Y}\Big)\\
    &~~~~~~~~~+o(1)\\
    =&  \p\Big(\sup_{(t,s) \in \mathcal{E}_{\rho_n/2,P}}\Big|\frac{1}{\sqrt{mq}}\sum_{l=1}^m\nu_l\sum_{j \in I_l}  \epsilon_j(s)h_n^{-1/2}K^*_{h_n}(j/n-t)\Big|>q^*_{1-\alpha}\Big|\mathcal{Y}\Big)+o(1)\\
    =&\alpha+o(1)~,
\end{align}
where the third inequality follows by \eqref{p40}, the forth follows by \eqref{p41}
and the fifth by  equation \eqref{p25}  in combination with Theorem 1 from \cite{chernozhukov2017} (Here we may assume $1/\log(n) \lesssim \underline \sigma$ by (A7) and the arguments in the proof of Theorem 4.2 of \cite{Chetverikov2020}). 
\end{proof}

\textbf{Proof of \eqref{p45}}
Note that
\begin{align}
\label{p48}
    T_{n,\Delta}=\sqrt{nh_n}(\hat d_{\infty,n}-d_{\infty,n})+\sqrt{nh_n}(d_{\infty,n}-\Delta)~.
\end{align}

By Lemmas \ref{pl4} and \ref{pl2} and assumption (A6) we have 
\begin{align}
\label{p46}
    \sqrt{nh_n}(\hat d_{\infty,n}-d_{\infty,n}) \lesssim O_\p\Big(h_n^{-(3+\alpha^{-1})/J}\Big)~.
\end{align}
Further we observe by the continuity of $\mu$ and $g_\mu$ that for $n$ sufficiently large we have
\begin{align}
    d_\infty-d_{\infty,n}<(d_\infty-\Delta)/2
\end{align}
so that for $n$ large enough
\begin{align}
\label{p47}
    \sqrt{nh_n}(d_{\infty,n}-\Delta) \geq \sqrt{nh_n}(d_\infty-\Delta)/2~.
\end{align}
Combining \eqref{p48}, \eqref{p46} and \eqref{p47} gives
\begin{align}
     T_{n,\Delta} \geq  O_\p\Big(h_n^{-(3+\alpha^{-1})/J}\Big)+ \sqrt{nh_n}(d_\infty-\Delta)/2
\end{align}
which yields the desired result.

\subsection{Proof of Theorem \ref{pt5}}

We start with the case $t^*(s,\Delta) \in [x_0,x_1]$. By Lemma \ref{pl2} and the choice of $\delta_n$ we always have that 
\begin{align}
    \sup_{t \in [x_0\lor h_n, y]}|\hat d(t,s)|<\Delta-\delta_n
\end{align} holding implies that
\begin{align}
    \sup_{t \in [x_0, y]}| d(t,s)|<\Delta
\end{align}
holds on a set $\mathcal{A}$ with $\p(\mathcal{A})=1-o(1)$ that does not depend on $s$. By the representations \eqref{p60} and \eqref{p61} we then immediately obtain for all $s \in S$ that
\begin{align}
\label{p62}
    \hat t^*(s,\Delta) \leq t^*(\Delta)+h_n
\end{align}
holds on $\mathcal{A}$.\\

Similarly we have that
\begin{align}
     d(t^*(\Delta,s),s)=\Delta
\end{align}
implies that for any $x>0$ we have
\begin{align}
    d(t^*(\Delta,s)-x,s) \geq \Delta - \omega(x,s)~.
\end{align}
Choosing $x=\omega^{-1}(\delta_n/2,s)$ and applying Lemma \ref{pl2} then implies for all $s \in S$ that
\begin{align}
    \hat d(t^*(\Delta,s)-x,s)\geq \Delta -\delta_n/2-O_\p\Big((nh_n)^{-1/2}h_n^{-(3+\alpha^{-1})/J}\Big)\geq \Delta -\delta_n
\end{align}
holds on $\mathcal{A}$. A similar argument for the case $ d(t^*(\Delta,s),s)=-\Delta$ and again using the representations \eqref{p60} and \eqref{p61} yields that for all $s \in S$ we have that
\begin{align}
\label{p63}
    \hat t^*(\Delta,s) \geq t^*(\Delta,s)-\omega^{-1}(\delta_n/2,s)
\end{align}
holds on $\mathcal{A}$. Combining \eqref{p62} and \eqref{p63} yields the first part of Theorem \ref{pt5}.\\

Let us now turn to the case $t^*(s,\Delta)=\infty$. Define $D_{\infty,n}(s)=\sup_{ t \in I_n}|d(t,s)|$ and $\hat D_{\infty,n}(s)=\sup_{ t \in I_n}|\hat d(t,s)|$ and observe that
\begin{align}
    \p(\hat t^*(s,\Delta) < t^*(s,\Delta))&= \p( \hat D_{\infty,n}(s)>\Delta-\delta_n)\\
    &=\p(\hat D_{\infty,n}(s) - D_{\infty,n}(s)+\delta_n > \Delta - D_{\infty,n}(s))\\
    &=o(1)
\end{align}
due to Lemma \ref{pl2}, the choice of $\delta_n$ and the fact that $\Delta - D_{\infty,n}(s)>0 
 \ \forall s \in S^c$. Note that we can make this uniform in $s$ in the case that $\Delta-d_\infty>0$ which finishes the proof of the second part of Theorem \ref{pt5}.

\subsection{Gaussian approximation for dependent data}

In this section we will consider a strictly stationary mean zero sample $X_1,...,X_n \in \R^p$ that we assume to be $\beta$-mixing. Further let $T_n=\max_{1 \leq j \leq p} n^{-1/2}\sum_{i=1}^nX_{ij}$. Assume that there exists some constant $D_n$ such that 
\begin{align}
    |X_{ij}|\leq D_n \quad \text{ a.s. } 1 \leq i \leq n, 1 \leq j \leq p
\end{align}
Further let 
\begin{align}
    B_l=\sum_{ i \in I_l} X_i, \quad S_l= \sum_{ i \in J_l} X_i
\end{align}
and $\{ \tilde B_l , 1 \leq l \leq m\}$, $\{ \tilde S_l, 1\leq l \leq m\}$ be sequences of independent copies of $B_l$ and $S_l$. Additionally let $Y=(Y_1,...,Y_p)$ be a centered normal vector with covariance matrix $\E[YY^T]=(mq)^{-1}\sum_{i=1}^m\E[B_iB_i^T]$. \\

For any integer $1 \leq q \leq n$ we also define
\begin{align}
    \bar \sigma(q):=\max_{1 \leq j \leq p }\frac{1}{m}\sum_{I \in \{I_1,...,I_m\}}\text{Var}\Big(q^{-1/2}\sum_{i \in I}X_{ij}\Big)   
\end{align}
Additionally let $\bar \sigma(0):=1$.
We now establish a Gaussian approximation result for $\beta$-mixing random variables that allows for decaying variances in all but one coordinate.

\begin{theorem}
\label{pt10}
   Suppose that 
   \begin{align}
       \bar \sigma(q) &\gtrsim 1 \\
       \bar \sigma(q) \lor \bar \sigma(r) &\lesssim 1\\   
       (q+r\log(pn))\log(pn)^{1/2}D_nn^{-1/2}&\lesssim n^{-c_1}\\
       r/q \log(p)^2 & \lesssim n^{-c_1}\\
       q^{1/2}D_n\log(pn)^5n^{-1/2}&\lesssim n^{-c2}       
   \end{align}
   then
   \begin{align}
     \sup_{t >x_0}\Big|\p(T_n\leq t)-\p\Big(\max_{1 \leq j \leq p}Y_j\leq t\Big)\Big|\lesssim n^{-c}+(m-1)\beta(r)
\end{align}
for some $c>0$ only depending on $c_1,c_2$.  In particular we also have that
\begin{align}
\label{p300}
     \sup_{t >x_0}\Big|\p(T_n\leq t+\epsilon)-\p(T_n\leq t)\Big|\lesssim \epsilon\sqrt{\log(pn)}+n^{-c}+(m-1)\beta(r)
\end{align}
for some $c>0$.
\end{theorem}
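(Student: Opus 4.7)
My plan is the standard big-block/small-block Gaussian approximation for mixing triangular arrays. Write
\[
n^{-1/2}\sum_{i=1}^n X_i \;=\; (mq/n)^{1/2}\cdot (mq)^{-1/2}\sum_{l=1}^m B_l \;+\; n^{-1/2}\sum_{l=1}^m S_l \;+\; n^{-1/2}R_n,
\]
where $R_n$ is a boundary remainder of length at most $q+r$ and $mq/n = 1+o(1)$ under the hypotheses. The proof then reduces to three steps: (i) the small-block plus boundary contribution is negligible in sup-norm, (ii) the dependent blocks $B_l$ may be replaced by independent copies $\tilde B_l$ via $\beta$-mixing, and (iii) a high-dimensional CLT for independent sums applies to $(mq)^{-1/2}\sum_l \tilde B_l$.

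\textbf{Steps (i) and (ii).} For (i), a first application of Berbee's lemma renders the small blocks $S_l$ essentially independent at total-variation cost $\le m\beta(r)$; each $S_l$ has envelope $rD_n$ and coordinate variance $\lesssim r\bar\sigma(r)\lesssim r$. A Bernstein/maximal inequality over the $p$ coordinates then gives, with high probability,
\[
\bigl\|n^{-1/2}\textstyle\sum_l S_l\bigr\|_\infty \;\lesssim\; \sqrt{(r/q)\log p} \;+\; rD_n\log(pn)/\sqrt n,
\]
which is $O(n^{-c_1/2})$ under the stated rate conditions, and the boundary contribution is controlled deterministically by $(q+r)D_n/\sqrt n$. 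For (ii), Berbee's lemma applied to the $m$ big blocks separated by gaps of length $r$ couples $(B_1,\dots,B_m)$ to independent $(\tilde B_1,\dots,\tilde B_m)$ with total-variation cost $(m-1)\beta(r)$; this contributes the term $(m-1)\beta(r)$ to the Kolmogorov bound.

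\textbf{Step (iii) and anti-concentration.} To $(mq)^{-1/2}\sum_l \tilde B_l$ I apply the high-dimensional Gaussian approximation of Chernozhukov--Chetverikov--Kato (2017): the sup-norm envelope of $\tilde B_l/\sqrt q$ is $\sqrt q D_n$, the average coordinate variance of each $\tilde B_l/\sqrt q$ is bounded above by $\bar\sigma(q)\lesssim 1$ and below (at the best coordinate) by $\bar\sigma(q)\gtrsim 1$, so the CCK rate is of order $(qD_n^2\log(pn)^7/m)^{1/6}$, which is $O(n^{-c})$ under $q^{1/2}D_n\log(pn)^5 n^{-1/2}\lesssim n^{-c_2}$. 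Combining (i)--(iii) via the triangle inequality gives the first conclusion. For \eqref{p300}, Nazarov's anti-concentration inequality applied to the Gaussian $Y$---whose maximum coordinate variance is at least $\bar\sigma(q)\gtrsim 1$---gives $\p(\max_j Y_j\in[t,t+\epsilon])\lesssim \epsilon\sqrt{\log p}$. Adding the Gaussian-approximation error at the endpoints $t$ and $t+\epsilon$ yields the claimed bound.

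\textbf{Main obstacle.} The delicate issue is the joint calibration of the three error terms: enlarging $q$ shrinks the factor $(m-1)\beta(r)$ (by shrinking $m$) but worsens the CCK rate through the $q^{1/2}D_n$ envelope, while shrinking $r$ has the opposite effect on the small-block contribution. The hypothesised rate conditions carve out a feasible region on which all three errors vanish, and the bookkeeping is subtle because two separate Berbee couplings (one for the big blocks, one for the small blocks) are needed whose total-variation costs should not compound into $2m\beta(r)$; running them on disjoint coupling budgets is what the statement of the theorem silently requires. A second subtlety is that the lower variance proxy $\bar\sigma(q)\gtrsim 1$ is only asserted at the best coordinate, so the application of Nazarov proceeds by localising to that coordinate, mirroring Assumption (A7) elsewhere in the paper.
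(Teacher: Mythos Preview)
Your proposal is correct and follows essentially the same route as the paper: big-block/small-block decomposition, coupling to independence via $\beta$-mixing, a maximal inequality for the small-block sum, a high-dimensional CLT for the independent big blocks, and Nazarov-type anti-concentration for the second display.

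Two clarifications are worth recording. First, your worry about the two couplings ``compounding'' is unfounded: the small blocks are separated by big blocks of length $q>r$, so their Berbee/Yu cost is $(m-1)\beta(q)\le (m-1)\beta(r)$, and the factor $2$ is absorbed into the $\lesssim$. The paper handles the small-block contribution exactly as you do (coupling, then a maximal moment bound---Lemma D.3 of \cite{Chetverikov2020}---followed by Markov at level $\delta_1\asymp n^{-c_1/2}\log(pn)^{-1/2}$, with the shift $\delta_1$ then removed via anti-concentration).

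Second, the point you flag last is in fact the main technical subtlety, not the rate calibration. Because $\bar\sigma(q)\gtrsim 1$ is only guaranteed at a single coordinate, the standard CCK (2013/2017) Gaussian approximation and Nazarov inequality---both of which require $\min_j\sigma_j^2\gtrsim 1$---do not apply directly. The paper uses Theorem~4.1 and Lemma~C.3 of \cite{Chetverikov2020}, which allow degenerate coordinate variances; this is precisely why the supremum runs over $t>x_0$ rather than $t\in\R$. Your phrase ``localising to the best coordinate'' is the right intuition, but the clean implementation is to cite the 2020 version of the theory rather than the 2017 one.
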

\begin{proof}
For the remainder of this proof $c$ denotes a generic positive constant that may change from line to line that only depends on $c_1,c_2$ as well as $d_1,d_2$.  First we bound the error incurred by leaving out the small blocks, i.e. we have that
    \begin{align}
        |T_n-\max_{1 \leq j \leq p}n^{-1/2}\sum_{i=1}^mB_{ij}|\leq \max_{1 \leq j \leq p}\Big|n^{-1/2}\sum_{i=1}^mS_{ij}\Big| + \max_{1 \leq j \leq p}\Big|n^{-1/2}S_{(m+1)j}\Big|
    \end{align}
and note that
\begin{align}  
\label{p149}
    \max_{1 \leq j \leq p}\Big|n^{-1/2}S_{(m+1)j}\Big|&\lesssim  qD_n/\sqrt{n}     
\end{align}

By Corollary 2.7 from \cite{Yu1994} (or Lemma 1 from \cite{Yoshihara1978}) we have
\begin{align}
\label{p150}
   &\Big| \p\Big(\max_{1 \leq j \leq p}n^{-1/2}\sum_{i=1}^mB_{ij}\leq t\Big)-\p\Big(\max_{1 \leq j \leq p}n^{-1/2}\sum_{i=1}^m\tilde B_{ij}\leq t\Big)\Big| \leq (m-1)\beta(r)\\
   &\Big| \p\Big(\max_{1 \leq j \leq p}n^{-1/2}\Big|\sum_{i=1}^mS_{ij}\Big|\leq t\Big)-\p\Big(\max_{1 \leq j \leq p}n^{-1/2}\Big|\sum_{i=1}^m\tilde S_{ij}\Big|\leq t\Big)\Big| \leq (m-1)\beta(q)
\end{align}
Using independence and the fact that $\tilde S_{ij}$ is bounded by $rD_n$ we obtain by Lemma D.3 from \cite{Chetverikov2020} that
\begin{align}
    \E\left[\max_{1 \leq j \leq p}\Big|n^{-1/2}\sum_{i=1}^m\tilde S_{ij}\Big|\right]\lesssim \sqrt{r/q \bar \sigma(r)\log(p)}+n^{-1/2}rD_n\log(p) 
\end{align}
so that 
\begin{align}
\label{p151}
     \p\Big(\max_{1 \leq j \leq p}n^{-1/2}\Big|\sum_{i=1}^m\tilde S_{ij}\Big|> t\Big) &\lesssim \Big(\sqrt{r/q \bar \sigma(r)\log(p)}+n^{-1/2}rD_n\log(p)\Big)/t   
\end{align}

Let $\delta_1=\sqrt{n^{-c_1}/\log(pn)}$. Combining \eqref{p149}, \eqref{p150}, \eqref{p151} (choose $t=\delta_1$ in \eqref{p151}) we obtain for $t>x_0$ that
\begin{align}
    \p(T_n\leq t)& \leq  \p\Big(\max_{1 \leq j \leq p}n^{-1/2}\sum_{i=1}^mB_{ij}\leq t+\delta_1+qD_nn^{-1/2}\Big)+n^{-c}+(m-1)\beta(q)\\
    & \leq \p\Big(\max_{1 \leq j \leq p}n^{-1/2}\sum_{i=1}^m\tilde B_{ij}\leq t+\delta_1+qD_nn^{-1/2}\Big)+n^{-c}+(m-1)\beta(q)\\
    & \leq  \p\Big(\max_{1 \leq j \leq p}n^{-1/2}\sum_{i=1}^m\tilde B_{ij}\leq t\Big)+n^{-c}+ \Big(\frac{qD_n^2\log(pn)^{10}}{n}\Big)^{1/6}+(m-1)\beta(r)
\end{align}
where we used Lemma \ref{pl10} for the last line. A similar argument also yields the reverse inequality so that for all $t>x_0$ we have
\begin{align}
   \sup_{t >x_0}\Big|\p(T_n\leq t)-\p\Big(\max_{1 \leq j \leq p}n^{-1/2}\sum_{i=1}^m\tilde B_{ij}\leq t\Big)\Big|\lesssim n^{-c}+(m-1)\beta(r)
\end{align}

Now we apply Theorem 4.1 from \cite{Chetverikov2020}  (note that for at least one $j$ we have $\bar \sigma(q) \simeq n^{-1}\sum_{i=1}^m\text{Var}(\tilde B_{ij})$ and that for all $j$ it holds that $\tilde B_{ij}\lesssim qD_n$) to further obtain that
\begin{align}
\label{p301}
     \sup_{t >x_0}\Big|\p(T_n\leq t)-\p\Big(\max_{1 \leq j \leq p}\sqrt{mq/n}Y_j\leq t\Big)\Big|\lesssim n^{-c}+(m-1)\beta(r)
\end{align}
The result then follows by the same arguments as in step 4 of the proof of Theorem E.1 from \cite{Chernozhukov2018} where we use Lemma C.3 from \cite{Chetverikov2020} instead of Step 3.

The additional conclusion of the theorem follows by combining equation \eqref{p301} with Lemma \ref{pl10}

\end{proof}

\begin{Lemma}
\label{pl10}
     Suppose that the conditions of Theorem \ref{pt10} are fulfilled for $q=1,r=0$ and that $X_1,...,X_n$ are independent. Then for for any $x_0>0$ we have that for all $t>x_0$ and $\epsilon>0$ it holds that
    \begin{align}
        &\p\Big(\sup_{1 \leq j \leq p}n^{-1/2}\sum_{i=1}^nX_{ij}\leq t+\epsilon\Big)-\p\Big(\sup_{1 \leq j \leq p}n^{-1/2}\sum_{i=1}^nX_{ij}\leq t\Big)\\
        \lesssim &\Big(\frac{D_n^2\log(pn)^{10}}{n}\Big)^{1/6}+\epsilon\sqrt{\log(pn)}
    \end{align}
\end{Lemma}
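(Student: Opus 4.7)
The statement is the standard anti-concentration bound for the maximum of sums of independent bounded random vectors: one combines a Gaussian approximation error with a Nazarov-type anti-concentration rate for the limiting Gaussian maximum. The plan is to reduce the claim to these two well-known ingredients via a triangle inequality.

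First I would introduce the Gaussian companion: let $Y=(Y_1,\dots,Y_p)$ be a centered Gaussian vector with covariance $n^{-1}\sum_{i=1}^{n}\E[X_{i}X_{i}^{\top}]$. Applying a Gaussian approximation result for maxima of sums of independent bounded random vectors (e.g.\ Proposition 2.1 in \cite{chernozhukov2017}, see also Theorem E.1 of \cite{Chernozhukov2018}) under the hypotheses of Theorem \ref{pt10} specialised to $q=1$, $r=0$ yields
$$\sup_{t\in\R}\Big|\p(T_n\leq t)-\p\Big(\max_{1\leq j\leq p}Y_j\leq t\Big)\Big|\lesssim \Big(\frac{D_n^2\log(pn)^{10}}{n}\Big)^{1/6}.$$
Next I would invoke anti-concentration for the Gaussian maximum: the hypothesis $\bar\sigma(1)\gtrsim 1$ from Theorem \ref{pt10} specialised to $q=1$ guarantees that at least one coordinate of $Y$ has variance bounded away from $0$, which is precisely the regime in which the Nazarov-type inequality (see, e.g., Lemma A.1 in \cite{Chernozhukov2018}) gives
$$\sup_{t\in\R}\p\Big(\max_{1\leq j\leq p}Y_j\in(t,t+\epsilon]\Big)\lesssim \epsilon\sqrt{\log p}\lesssim \epsilon\sqrt{\log(pn)}.$$

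The proof is then closed by a triangle inequality: $\p(T_n\leq t+\epsilon)-\p(T_n\leq t)$ is controlled by the Gaussian-approximation error at $t+\epsilon$, the Gaussian anti-concentration term, and the Gaussian-approximation error at $t$, which together deliver the claimed bound uniformly for $t>x_0$. The only subtlety is that the \emph{classical} Nazarov inequality requires the \emph{minimum} coordinate variance of $Y$ to be bounded below, whereas our hypothesis only controls one coordinate; however, the refined version used in this line of work (where anti-concentration of $\max_j Y_j$ is controlled by a single non-degenerate coordinate through the Gaussian concentration inequality) matches precisely the $\bar\sigma(1)\gtrsim 1$ assumption. Aside from this observation, the argument is a direct combination of the two probabilistic ingredients.
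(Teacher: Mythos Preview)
Your approach is correct and coincides with the paper's: the proof is precisely Gaussian approximation plus Gaussian anti-concentration plus triangle inequality. The paper phrases this as ``the same as Lemma 4.4 in \cite{Chernozhukov22} but using Theorem 4.1, equation (56) and Lemma C.3 from \cite{Chetverikov2020}''; these substitutions are exactly what handle the subtlety you flagged, namely that only one coordinate variance is guaranteed to be bounded below, so the standard Nazarov bound must be replaced by the refined version in \cite{Chetverikov2020}.
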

\begin{proof}
   The proof is the same as for Lemma 4.4 in \cite{Chernozhukov22} but we use Theorem 4.1, equation (56) and Lemma C.3 from \cite{Chetverikov2020} instead of Lemma 4.3 and J.3.
\end{proof}

\section{Further technical details} \label{sec6}

The following Lemma is used in the proof of Lemma \ref{pl6} to reduce to the case of bounded random variables. 
\begin{Lemma}
\label{pl11}
    Let $Z_1,...,Z_n \in \R^p$ be a sequence of random vectors with mixing coefficients $(\beta_Z(k))_{k \in \N}$ such that
    \begin{align}
    \label{p157}
        \sup_{1 \leq j \leq p}\sup_{1 \leq i \leq n}\E[\exp(|Z_{ij}|)]&\lesssim 1\\
        \sum_{k=1}^\infty (k+1)^{J/2-1}\beta(k)^{\frac{\delta}{J+\delta}}&<\infty~, 
    \end{align}
    for some even $J\geq 8$ and some $\delta>0$.  We define for any $C>0$ the vectors
    \begin{align}
         \tilde Z_{ij}=Z_{ij}\1\{|Z_{ij}|\leq n^{1/3}\}       ~.
    \end{align}
    Assuming that the vectors $\tilde Z_{i}-\E[\tilde Z_{i}]$ fulfill the conditions of Theorem \ref{pt10} for some choice of $r,q$ and that $\log(np)\simeq \log(n)$ we have for any $x_0>0$ that 
     \begin{align}
         \sup_{t \geq x_0}\Big|\p\Big( \sup_{1 \leq j \leq p} n^{-1/2}\sum_{i=1}^nZ_{ij} \leq t\Big) - \p\Big( \sup_{1 \leq j \leq p}n^{-1/2}\sum_{i=1}^n(\tilde Z_{ij}-E[\tilde Z_{ij})] \leq t\Big)\Big| \lesssim o(1)~.
    \end{align}   
    Further, recalling the notations introduced after Theorem \ref{pt2} we have with high probability that
    \begin{align}
    \label{p502}
       \sup_{t \geq x_0}& \Big|\p\Big( \sup_{1 \leq j \leq p}\frac{1}{\sqrt{mq}}\sum_{l=1}^m\nu_l\sum_{k \in I_l}Z_{kj} \leq t \Big| \mathcal{Z}\Big)- \\
       &~~~~~~~~\p\Big( \sup_{1 \leq j \leq p}\frac{1}{\sqrt{mq}}\sum_{l=1}^m\nu_l\sum_{k \in I_l}(\tilde Z_{kj}-\E[\tilde Z_{kj}) \leq t \Big| \mathcal{Z}\Big)\Big| \lesssim o(1)
    \end{align}
    where $\mathcal{Z}=\sigma\Big(Z_1,...,Z_n\Big)$.
\end{Lemma}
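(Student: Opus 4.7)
The plan is to run a truncation-plus-centering argument: the uniform exponential moment bound \eqref{p157} makes the truncation at level $n^{1/3}$ effectively invisible, while the centering correction $\E[\tilde Z_{ij}]$ is super-polynomially small. Combining these with the anti-concentration estimate \eqref{p300} of Theorem \ref{pt10} (applied to the truncated-centered process, which by assumption satisfies the required hypotheses) then closes the argument at the $o(1)$ scale.

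I would first show that the event $\mathcal{E}_n := \{\max_{i,j}|Z_{ij}| \leq n^{1/3}\} \in \mathcal{Z}$ has probability $1-o(1)$. Markov together with \eqref{p157} gives $\p(|Z_{ij}| > n^{1/3}) \lesssim e^{-n^{1/3}}$, and a union bound over the $np$ pairs is harmless because $\log(np) \simeq \log(n) \ll n^{1/3}$. Integrating the same tail bound,
\begin{align*}
\bigl|\E[Z_{ij}\1\{|Z_{ij}|>n^{1/3}\}]\bigr| \leq n^{1/3}\p(|Z_{ij}|>n^{1/3}) + \int_{n^{1/3}}^\infty \p(|Z_{ij}|>t)\,dt \lesssim (n^{1/3}+1)e^{-n^{1/3}},
\end{align*}
so, assuming $\E[Z_{ij}] = 0$ (the setting in which the lemma is applied in the proof of Lemma \ref{pl6}), $|\E[\tilde Z_{ij}]| \lesssim n^{1/3}e^{-n^{1/3}}$ uniformly in $i,j$.

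On $\mathcal{E}_n$ the two processes in the first claim differ coordinatewise only by the deterministic quantity $-n^{-1/2}\sum_i \E[\tilde Z_{ij}]$, whose absolute value is uniformly bounded by $\sqrt{n}\cdot n^{1/3}e^{-n^{1/3}} = o(1/\sqrt{\log n})$. Invoking the anti-concentration estimate \eqref{p300} of Theorem \ref{pt10} for the truncated-centered vectors then yields the first claim. For the bootstrap statement I condition on $\mathcal{Z}$ and restrict attention to $\mathcal{E}_n$. The coordinatewise difference between the two bootstrap maxima is
\begin{align*}
C_j := \frac{1}{\sqrt{mq}}\sum_{l=1}^m \nu_l \sum_{k\in I_l}\E[\tilde Z_{kj}],
\end{align*}
a centered Gaussian conditional on $\mathcal{Z}$ with conditional variance at most $q\cdot \max_{k,j}|\E[\tilde Z_{kj}]|^2 \lesssim qn^{2/3}e^{-2n^{1/3}}$. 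A standard Gaussian maximal inequality then gives $\max_j|C_j| \lesssim \sqrt{q\log p}\cdot n^{1/3}e^{-n^{1/3}}$, which is super-polynomially small. Combining this with Gaussian anti-concentration, applied conditionally on $\mathcal{Z}$ to the reference process $\frac{1}{\sqrt{mq}}\sum_l\nu_l\sum_{k\in I_l}(\tilde Z_{kj}-\E[\tilde Z_{kj}])$ (conditionally Gaussian in the $\nu_l$'s, with a variance lower bound inherited from the assumed $\bar\sigma(q)\gtrsim 1$ via a concentration-of-empirical-variance step), delivers \eqref{p502}.

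The main technical obstacle is the conditional variance lower bound needed to apply Gaussian anti-concentration to the reference bootstrap vector: one must argue that $\frac{1}{mq}\sum_l\bigl(\sum_{k\in I_l}(\tilde Z_{kj^\star}-\E[\tilde Z_{kj^\star}])\bigr)^2$ remains bounded below on a $1-o(1)$ event for some coordinate $j^\star$ attaining (or nearly attaining) the maximum. Everything else is routine bookkeeping: the exponential tail makes both the truncation indicators and the centering correction negligible at any polynomial scale, well below the $o(1)$ scale resolved by anti-concentration, so the argument reduces the problem for $Z$ to the one for its bounded avatar $\tilde Z-\E[\tilde Z]$ without further difficulty.
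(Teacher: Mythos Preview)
Your argument is correct and, for the first statement, actually cleaner than the paper's. The paper does not work on the high-probability event $\mathcal{E}_n=\{\max_{i,j}|Z_{ij}|\le n^{1/3}\}$ directly; instead it keeps the tail piece $\check Z_{ij}=Z_{ij}\1\{|Z_{ij}|>n^{1/3}\}$ as a genuine random perturbation, bounds $\sup_j n^{-1/2}\sum_i(\check Z_{ij}-\E[\check Z_{ij}])$ via Yoshihara's $J$th-moment inequality for $\beta$-mixing sequences together with H\"older and Markov, and then absorbs this (random) shift by the anti-concentration bound \eqref{p300}. Your route sidesteps the mixing moment inequality entirely: on $\mathcal{E}_n$ you have $Z=\tilde Z$ exactly, so only the deterministic centering $n^{-1/2}\sum_i\E[\tilde Z_{ij}]$ remains, and the exponential tail makes this super-polynomially small. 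Both arguments finish with the same anti-concentration step; yours uses strictly less machinery.

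For the second statement the two arguments diverge more substantially. The paper again restricts to $\mathcal{E}_n$ and observes that the two conditionally Gaussian vectors then have covariance matrices differing by at most a quantity of order $q n^{2/3}e^{-n^{1/3}}$; it then invokes the Gaussian comparison Lemma \ref{pl12} (whose proof already contains the reduction to coordinates with variance $\gtrsim \log(pn)^{-1}$), so no separate conditional variance lower bound needs to be established. Your approach---bounding $\max_j|C_j|$ and then appealing to anti-concentration for the reference bootstrap vector---is equally valid but requires the extra ``concentration-of-empirical-variance'' step you flag as the main obstacle. That step can indeed be carried out (and something close to it is implicit elsewhere in the paper, cf.\ the use of assumption (A7) in the proofs of Lemma \ref{pl8} and Theorem \ref{pt4}), but the paper's covariance-comparison route avoids it altogether.
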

\begin{proof}
    We begin with the first statement. Define 
    \begin{align}        
        \check Z_{ij}=Z_{ij}\1\{|Z_{ij}|>n^{1/3}\}
    \end{align}
    and note that
    \begin{align}
        Z_{ij}- \tilde Z_{ij}=\check Z_{ij}
    \end{align}
    so that
    \begin{align}
    \label{p152}
        &\p\Big( \sup_{1 \leq j \leq p} n^{-1/2}\sum_{i=1}^nZ_{ij} \leq t\Big) \\
        \geq& \p\Big( \sup_{1 \leq j \leq p}n^{-1/2}\sum_{i=1}^n(\tilde Z_{ij}-E[\tilde Z_{ij})] \leq t-\sup_{1 \leq j \leq p} n^{-1/2}\sum_{i=1}^n(\check Z_{ij}-\E[\check Z_{ij}]) \Big)
    \end{align}    
    Consider for an $s$ to be chosen later the inequalities
    \begin{align}
     \label{p153}
         \p\Big( \sup_{1 \leq j \leq p} n^{-1/2}\sum_{i=1}^n(\check Z_{ij}-\E[\check Z_{ij}]) > s\Big) &\lesssim p\sup_{1 \leq j \leq p}\E\Big[ \Big(n^{-1/2}\sum_{i=1}^n(\check Z_{ij}-\E[\check Z_{ij}])\Big)^{J}\Big]/s^{J}\\
         &\lesssim p\sup_{1 \leq j \leq p}\sup_{1 \leq i \leq n}\E\Big[|\check Z_{ij}-\E[\check Z_{ij}]|^{J+\delta}\Big]^{J/(J+\delta)}/s^{J}\\
         &\lesssim p\sup_{1 \leq j \leq p}\sup_{1 \leq i \leq n}\norm{\check Z_{ij}}_{J+\delta}^{J}/s^{J} 
    \end{align}
    where the first inequality follows by Markov, the second by the arguments in the proof of Theorem 3 from \cite{Yoshi78} and the third due to Hölder's inequality. In particular we obtain, in view of \eqref{p157}, by applying Hölder's and Markov's inequalities that 
    \begin{align}
    \label{p156}
        \p\Big( \sup_{1 \leq j \leq p} n^{-1/2}\sum_{i=1}^n(\check Z_{ij}-\E[\check Z_{ij}]) > n^{-\epsilon}\Big)=o(1)
    \end{align}
    for any $\epsilon>0$.\\
    Next we  combine \eqref{p156} and \eqref{p300} applied to $\tilde Z_i-\E[\tilde Z_i], i=1,...,n$ to obtain for some $c>0$ that
    \begin{align}   
        &\Big|\p\Big( \sup_{1 \leq j \leq p}n^{-1/2}\sum_{i=1}^n(\tilde Z_{ij}-E[\tilde Z_{ij})] \leq t-\sup_{1 \leq j \leq p} n^{-1/2}\sum_{i=1}^n(\check Z_{ij}-\E[\check Z_{ij}]) \Big)\\
         & \quad \quad -\p\Big( \sup_{1 \leq j \leq p}n^{-1/2}\sum_{i=1}^n(\tilde Z_{ij}-E[\tilde Z_{ij})] \leq t\Big)\Big| \\
        &        \lesssim o(1)~.
    \end{align}
    Combining this with equation \eqref{p152} yields
    \begin{align}
         \p\Big( \sup_{1 \leq j \leq p} n^{-1/2}\sum_{i=1}^nZ_{ij} \leq t\Big) &\geq \p\Big( \sup_{1 \leq j \leq p}n^{-1/2}\sum_{i=1}^n(\tilde Z_{ij}-E[\tilde Z_{ij})] \leq t\Big)+o(1) 
    \end{align}
    A similar arguments yields the reverse inequality and finishes the proof of the first statement of the Lemma. For the second statement we merely note that the set $\{ Z_{ij}-\tilde Z_{ij}=0, i=1,...,n, j=1,...,p\}$ has probability $1-o(1)$, using this to bound $\bar \Delta$ when applying Lemma \ref{pl12} to the Gaussian vectors in \eqref{p502} yields the desired conclusion. 
\end{proof}

\begin{Lemma}
\label{pl12}
    Let $Z_1, Z_2$ be two $p$-dimensional mean zero Gaussian vectors with covariance matrices $\Sigma^1=(\sigma^1_{kl})_{1 \leq k,l \leq p}$ and $\Sigma^2=(\sigma^2_{kl})_{1 \leq k,l \leq p}$, respectively.  Then for any $x_0>0$ we have
    \begin{align}
        \sup_{t\geq x_0}\Big|\p\Big(\max_{1 \leq j \leq p}Z_{1j}\leq t\Big)-\p\Big(\max_{1 \leq j \leq p}Z_{1j}\leq t\Big)\Big|\lesssim \bar \Delta^{1/3}\log(np/\bar \Delta)
    \end{align}
    where 
    \begin{align}
        \bar \Delta=\sup_{1 \leq k,l \leq p}|\sigma^1_{kl}-\sigma^2_{kl}|~.
    \end{align}
\end{Lemma}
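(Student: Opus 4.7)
\textbf{Proof plan for Lemma \ref{pl12}.} The plan is to follow the now-standard Slepian--Stein interpolation argument combined with a smooth-max approximation, as in the proofs of Theorem 2 in \cite{chernozhukov2017}. The inputs are: (i) a smooth surrogate for $\max_j$ so that Gaussian integration by parts is applicable, (ii) a smooth surrogate for the indicator $\1\{\cdot \le t\}$, and (iii) a Gaussian anti-concentration bound to trade off smoothing error against the probabilistic statement we actually care about.

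Concretely, introduce $\phi$ large, $\epsilon>0$ small, the log-sum-exp approximation
\begin{align}
F_\phi(x)=\phi^{-1}\log\Big(\sum_{j=1}^p e^{\phi x_j}\Big),
\end{align}
which satisfies $0\le F_\phi(x)-\max_j x_j\le \phi^{-1}\log p$, and a fixed smooth nonincreasing $g:\R\to[0,1]$ with $g(u)=1$ for $u\le 0$ and $g(u)=0$ for $u\ge 1$. Set $g_\epsilon(u)=g(u/\epsilon)$ and define $\varphi(x)=g_\epsilon(F_\phi(x)-t-\phi^{-1}\log p)$. Standard derivative bookkeeping using $\partial_j F_\phi=\pi_j$ with $\pi_j\ge 0$, $\sum_j\pi_j=1$ and $\partial_{jk}F_\phi=\phi(\pi_j\delta_{jk}-\pi_j\pi_k)$ yields
\begin{align}
\sum_{j,k=1}^p |\partial_{jk}^2\varphi(x)|\le \|g_\epsilon''\|_\infty+2\phi\|g_\epsilon'\|_\infty\lesssim \epsilon^{-2}+\phi\epsilon^{-1}.
\end{align}
The Gaussian interpolation identity (Stein's lemma applied to $Z(u)=\sqrt{u}Z_1+\sqrt{1-u}Z_2$) then gives
\begin{align}
|\E[\varphi(Z_1)]-\E[\varphi(Z_2)]|\le \tfrac{1}{2}\bar\Delta\sum_{j,k=1}^p \sup_{u\in[0,1]}|\E[\partial_{jk}^2\varphi(Z(u))]|\lesssim \bar\Delta(\epsilon^{-2}+\phi\epsilon^{-1}).
\end{align}

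Next I would run the sandwich argument. Because $\max_j Z_j\le F_\phi(Z)\le \max_j Z_j+\phi^{-1}\log p$, one checks that
\begin{align}
\1\{\max_j Z_{1j}\le t\}\le \varphi(Z_1),\qquad \varphi(Z_2)\le \1\{\max_j Z_{2j}\le t+\phi^{-1}\log p+\epsilon\}.
\end{align}
Subtracting and using the preceding inequality, together with the symmetric bound obtained by swapping $Z_1,Z_2$ and shifting $t$, gives
\begin{align}
\big|\p(\max_j Z_{1j}\le t)-\p(\max_j Z_{2j}\le t)\big|\le \tfrac{1}{2}\bar\Delta(\epsilon^{-2}+\phi\epsilon^{-1})+\mathcal{R}(t,\phi,\epsilon),
\end{align}
where $\mathcal{R}$ controls $\p(\max_j Z_{ij}\in[t-\phi^{-1}\log p-\epsilon,\,t+\phi^{-1}\log p+\epsilon])$ for $i=1,2$. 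For $t\ge x_0>0$ the Nazarov-type Gaussian anti-concentration inequality (see, e.g., Theorem~1 in \cite{chernozhukov2017} or Lemma \ref{pl10}) bounds this by $C(\phi^{-1}\log p+\epsilon)\sqrt{\log(np/\bar\Delta)}$; the reason for the $t\ge x_0$ restriction is precisely to keep the relevant variance factor in the anti-concentration bound from collapsing.

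Finally I would optimize. Choosing $\phi\sim \epsilon^{-1}\log p$ equalizes the two contributions inside the derivative bound, yielding a total bound of order $\bar\Delta \epsilon^{-2}\log p+\epsilon\,\log(np/\bar\Delta)^{1/2}$ after absorbing the anti-concentration term. Setting $\epsilon\sim \bar\Delta^{1/3}$ (and being a little generous with the logarithmic factors) produces the claimed rate $\bar\Delta^{1/3}\log(np/\bar\Delta)$, uniformly in $t\ge x_0$. I expect the main obstacle to be the anti-concentration step for $\max_j Z_{ij}$ when no lower bound on the variances $\sigma^i_{jj}$ is assumed: we only have $\bar\Delta$ controlling differences, so $Z_2$ could, in principle, be nearly singular. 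The saving grace is that the anti-concentration bound we actually need holds uniformly over Gaussian vectors with bounded diagonal variances and $t\ge x_0$ (a consequence of Nazarov's inequality combined with a diagonal-truncation argument), which is precisely what the restriction $t\ge x_0$ lets us invoke. Everything else is the familiar smoothing/derivative bookkeeping.
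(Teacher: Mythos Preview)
Your proposal is correct and follows essentially the same route as the paper. The paper's proof is a two-line sketch: it first reduces to the case $\sigma^1_{jj}\gtrsim \log(pn)^{-1}$ by invoking the truncation argument from the proof of Theorem~1 in \cite{Chetverikov2020}, and then appeals to Theorem~2 of \cite{Chernuzhokov2013b} (which \emph{is} the Slepian--Stein interpolation with smooth-max that you wrote out), tracking the $\underline\sigma$-dependence explicitly via the Nazarov-type bound of \cite{chernozhukov2017}. The only organizational difference is that the paper performs the small-variance truncation upfront, whereas you fold it into the anti-concentration step; either way one uses the restriction $t\ge x_0$ to discard coordinates with $\sigma_{jj}\lesssim \log(pn)^{-1}$ and then applies Nazarov on the survivors with $\underline\sigma\gtrsim \log(pn)^{-1/2}$, which is exactly what produces the extra logarithmic factor in the final bound.
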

\begin{proof}
First use the arguments from the proof of Theorem 1 from \cite{Chetverikov2020} to reduce to the case $\E[Z^2_{1j}] \gtrsim \log(pn)^{-1}$. We now only need to make the implicit dependence of the constants on $\underline \sigma$ in the bound of Theorem 2 from  \cite{Chernuzhokov2013b} explicit to obtain the desired result. The dependence of the constants on $\underline \sigma$ is inherited from the use of Theorem 3 in its proof, we can make them explicit by using the bounds from Theorem 1 in \cite{chernozhukov2017} instead.
\end{proof}

\bigskip

\textbf{Acknowledgements.} 
This work was partially supported by the DFG Research unit 5381 {\it Mathematical Statistics in the Information Age}, project number 460867398.
% the editor, associate editor, and the two referees for their constructive comments on our earlier version of this paper.
\bibliographystyle{apalike}
\setlength{\bibsep}{2pt}
%\begin{small}
\bibliography{reference}

\end{document}